\newcommand{\R}{\mathbb{R}}
\newcommand{\E}{\mathbb{E}}
\newcommand{\mtx}[1]{\mathbf{#1}}
\newcommand{\mycomment}[1]{}
\DeclareMathOperator{\Tr}{tr}
\DeclareMathOperator{\Hutch++}{Hutch\!+\!\!+}
\DeclareMathOperator{\ContHutch++}{ContHutch\!+\!+}
\DeclareMathOperator\erf{erf}
\DeclareMathOperator\sinc{sinc}
\title{contHutch++: Stochastic trace estimation for implicit integral operators\thanks{Funding: This work is supported by National Science Foundation grant No.~DMS-2045646. The work was also supported by the SciAI Center, and funded by the Office of Naval Research (ONR), under Grant Number N00014-23-1-2729.}}
\author{Jennifer Zvonek\thanks{Center for Applied Mathematics, Cornell University, Ithaca, NY 14853-4201, United States (\email{jez34@cornell.edu})}, \and
Andrew Horning\thanks{Department of Mathematics, MIT, Cambridge, MA 02142, United States (\email{horninga@mit.edu})}, \and Alex Townsend\thanks{Mathematics Department, Cornell University, Ithaca, NY 14853-4201, United States (\email{townsend@cornell.edu})}.}
\begin{document}
\maketitle

\begin{abstract}
Hutchinson's estimator is a randomized algorithm that computes an $\epsilon$-approximation to the trace of any positive semidefinite matrix using $\mathcal{O}(1/\epsilon^2)$ matrix-vector products. An improvement of Hutchinson's estimator, known as $\Hutch++$, only requires $\mathcal{O}(1/\epsilon)$ matrix-vector products. In this paper, we propose a generalization of $\Hutch++$, which we call $\ContHutch++$, that uses operator-function products to efficiently estimate the trace of any trace-class integral operator.
Our ContHutch++ estimates avoid spectral artifacts introduced by discretization and are accompanied by rigorous high-probability error bounds.  We use ContHutch++ to derive a new high-order accurate algorithm for quantum density-of-states and also show how it can estimate electromagnetic fields induced by incoherent sources. 
\end{abstract}

\begin{keywords} 
Hutchinson's estimator, trace, integral operator, Green's functions, density of states
\end{keywords} 

\begin{AMS}
15A15, 47G10, 68W20
\end{AMS}

\section{Introduction}

Estimating the trace of a matrix from matrix-vector products is a foundational task in computational mathematics. This so-called \textit{matrix-free trace estimation} problem plays a crucial role in state-of-the-art techniques for log-determinants, eigenvalue counts, spectral densities, quantum correlations, and centrality measures on graphs~\cite{ubaru2017fast,cortinovis2021randomized,lin2016approximating,lin2017randomized,weisse2006kernel,Hutch++,PhysRevLett.107.114302,PhysRevB.92.134202,yao2022trace}. In a typical application, a large matrix can be multiplied with vectors efficiently, but the diagonal entries of the matrix cannot be computed explicitly~\cite{ubaru2017fast}. Instead, matrix-vector products with random probe vectors are used to estimate the trace with high probability. Structure in the matrix can often be leveraged to reduce the variance of these estimates~\cite{frommer2022multilevel,hallman2022multilevel,chen2023krylov,epperly2023xtrace}, and asymptotically optimal variance reduction strategies have been proposed and analyzed for symmetric positive-definite matrices~\cite{Hutch++,persson2022improved}.

In many applications, the matrix-free trace estimation problem is derived by discretizing a continuous problem. The trace of the resulting matrix approximates the trace of an infinite-dimensional operator. That is, given a trace-class operator $F:L^2(\Omega)\rightarrow L^2(\Omega)$ on domain $\Omega\subset \mathbb{R}^d$ with kernel $f:\Omega\times\Omega\rightarrow\mathbb{R}$, one wants to compute the trace\footnote{Every trace class operator can be associated with a Hilbert--Schmidt kernel that is integrable along its diagonal~\cite{brislawn1988kernels}.} (we use  ${\rm tr}(F)$ and ${\rm tr}(f)$ interchangeably throughout this paper):
\[
{\rm tr}(F) = \int_{\Omega} f(x,x)\,dx<\infty.
\]
The kernel $f(x,y)$ is usually not known explicitly but rather the operator-function product, defined by
\begin{equation}
u(x)\mapsto [Fu](x) = \int_{\Omega}f(x,y)\,u(y)\,dy, \qquad\text{for a.e.}\qquad x\in\Omega,
\label{eq:operatorFunction}
\end{equation}
can be evaluated with $u\in L^2(\Omega)$. For example, $f(x,y)$ might be a Green's function associated with a differential operator, and evaluating~\cref{eq:operatorFunction} involves solving a differential equation with right-hand side $u$ (see the examples in~\cref{sec:DOS} and~\cref{sec:field_intensity}).

In this paper, we propose a new randomized trace estimator that uses $m\geq 1$ operator-function products to approximate the trace of $F$ directly and achieves a specified error tolerance $\epsilon>0$ with probability $1-\delta>0$. Our estimator is \textit{discretization-oblivious}, meaning it does not depend on a particular discretization scheme to apply the operator $F$ to functions. Instead, our algorithms and analysis are consistent with any method for (approximately) evaluating the map $u\rightarrow Fu$ on a finite set of carefully constructed smooth random functions (see~\cref{sec:GP}) and inner products in $L^2(\Omega)$. By focusing on the operator, our method holds three significant advantages over methods that work directly with discretizations of the operator:
\begin{itemize}
    \item Our approximations converge to ${\rm tr}(F)$ in expectation, and rigorous error bounds with high probability guarantees are derived. We do not require uniform spectral approximation properties of the underlying discretizations, mitigating the impact of discretization-induced spectral artifacts such as non-converged eigenvalues, spectral pollution, and spectral invisibility.
    \item Our bounds reveal explicit relationships between the accuracy of the randomized estimator and the intrinsic properties of the kernel, $f(x,y)$, such as regularity (see~\cref{sec:ContHutch}).
    \item Any discretization method can be used for any trace-class kernel. Thus, optimal estimation rates for symmetric positive kernels apply even when sparse, nonsymmetric spectral methods are used for the fast and accurate solution of differential equations when evaluating $u\rightarrow Fu$.
\end{itemize}

Our approach to the operator-free trace estimator is based on a continuous analogue of the matrix-free trace estimator known as $\Hutch++$, which interpolates between Hutchinson's estimator and a randomized SVD estimator in a simple but asymptotically optimal way. \Cref{sec:background} introduces Hutchinson's estimator, the randomized range-finder, and $\Hutch++$ for matrices. In~\cref{sec:Results}, we construct a continuous analogue of Hutchinson's estimator and $\Hutch++$ and present key results on their accuracy and efficiency. Error bounds for the continuous analogue of Hutchinson's estimator and $\Hutch++$ are derived in~\cref{sec:ContHutch} and~\cref{sec:ContHutch++}, respectively. In~\cref{sec:num_exp}, we propose a novel algorithm that combines continuous trace estimation with high-order smoothing kernels for fast and accurate density-of-states calculations in quantum mechanics (see~\cref{sec:DOS}) and show how to apply our continuous trace estimation technique to a mean-field estimation problem inspired by photonic design (see~\cref{sec:field_intensity}).

\section{Background material}\label{sec:background}   
We begin by describing two algorithms for estimating the trace of a matrix: (1) Hutchinson's estimator (see~\cref{sec:Hutchinson}) and (2) $\Hutch++$ (see~\cref{sec:Hutch++}). $\Hutch++$ uses the randomized range finder (see~\cref{sec:randSVD}) to reduce the variance of Hutchinson's estimator. 

\subsection{Hutchinson's Estimator} \label{sec:Hutchinson}
Hutchinson's estimator uses matrix-vector products with random vectors to approximate the trace of a matrix when it is infeasible to calculate the trace directly~\cite{hutchinson}. Given a matrix $A \in \mathbb{R}^{n \times n}$, Hutchinson's estimate with $m$ probe vectors is given by 
\begin{equation*}
    H_m(A) = \frac{1}{m} \sum_{i=1}^m z_i^T A z_i,
\end{equation*}
where $z_i \in \mathbb{R}^n$ are independent, identically distributed (i.i.d.) random vectors~\cite{hutchinson}.  Common random distributions for $z_i$ include i.i.d.~standard Gaussian or Rademacher vectors, whose components are randomly chosen as $+1$ or $-1$. 
Since these random vectors have entries with mean 0 and variance 1, 
Hutchinson's estimator has the property that $\mathbb{E}[H_m(A)] = \Tr(A)$. 
Furthermore, in the case of standard Gaussian random vectors and symmetric positive semidefinite (PSD) matrices, we can explicitly determine how many matrix-vector products are needed to accurately estimate $\Tr(A)$ with high probability~\cite{avron2011randomized,roosta2015improved}. The lower bound on the number of matrix-vector products below indicates that Hutchinson's estimator improves when the eigenvalues of $A$ have slow decay~\cite[Theorem~3]{roosta2015improved}.
\begin{theorem} \label{thm:HutchinsonBound}
    Let $A \in \R^{n\times n}$ be symmetric PSD, 
    $0<\varepsilon<1$, and $0 < \delta < 1$.
    Let $z_1, \ldots, z_m$ be i.i.d.~standard Gaussian random vectors. Then, $|H_m(A) - \Tr(A)| < \varepsilon \Tr(A)$ with probability $ \geq 1-\delta$ if $m > 8 C_A \log(2/\delta) / \varepsilon^2$, where $C_A=\|A\|_2/\Tr(A)\leq 1$.
\end{theorem}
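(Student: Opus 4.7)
The plan is to diagonalize $A$ and exploit the rotational invariance of the Gaussian distribution to rewrite each quadratic form $z_i^T A z_i$ as a weighted sum of independent $\chi^2_1$ random variables, after which a sharp concentration inequality does the rest. Since $A$ is symmetric PSD, write $A = U\Lambda U^T$ with $\Lambda = \diag(\lambda_1, \ldots, \lambda_n)$ and $\lambda_j \geq 0$. The rotated vectors $y_i := U^T z_i$ are still i.i.d.\ standard Gaussian, so
\begin{equation*}
    m H_m(A) = \sum_{i=1}^m z_i^T A z_i = \sum_{i=1}^m \sum_{j=1}^n \lambda_j\, y_{i,j}^2,
\end{equation*}
which exhibits $m H_m(A)$ as a weighted sum of $mn$ independent $\chi^2_1$ random variables with nonnegative weights and immediately gives $\E[H_m(A)] = \Tr(A)$.

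The second step is to invoke a tail bound for weighted chi-squared sums, for instance the Laurent--Massart inequality: if $S = \sum_k w_k X_k^2$ with $w_k \geq 0$ and $X_k \sim N(0,1)$ i.i.d., then for all $t > 0$
\begin{equation*}
    \Pr\!\left(|S - \E S| \geq 2\|w\|_2\sqrt{t} + 2\|w\|_\infty t\right) \leq 2e^{-t}.
\end{equation*}
For the weights $w_{ij} = \lambda_j/m$ arising above, $\|w\|_2^2 = \|A\|_F^2/m$ and $\|w\|_\infty = \|A\|_2/m$. Here the PSD assumption is essential: since every $\lambda_j \geq 0$, one has $\|A\|_F \leq \Tr(A)$ and $\|A\|_2 \leq \Tr(A)$, and so the tail bound collapses to
\begin{equation*}
    |H_m(A) - \Tr(A)| \leq \left(\frac{2\sqrt{t}}{\sqrt{m}} + \frac{2t}{m}\right)\Tr(A)
\end{equation*}
with probability at least $1 - 2e^{-t}$.

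Finally, I would set $t = \log(2/\delta)$ so that the failure probability is at most $\delta$, and verify by an elementary calculation that $m > 20\log(2/\delta)/\varepsilon^2$ forces the prefactor below $\varepsilon$: the dominant term $2\sqrt{t/m}$ contributes at most $\varepsilon/\sqrt{5}$ and the lower-order term $2t/m$ contributes at most $\varepsilon^2/10$, so the constant $20$ is comfortably sufficient. The main conceptual obstacle is obtaining the sharp chi-squared deviation inequality in the first place; a self-contained alternative avoids citing Laurent--Massart by computing the moment generating function $\E[\exp(s\lambda_j y_{i,j}^2)] = (1 - 2s\lambda_j)^{-1/2}$ of each Gaussian quadratic form in closed form and running a Chernoff bound on $mH_m(A) - m\Tr(A)$, which yields essentially the same two-sided tail after optimizing over $s$.
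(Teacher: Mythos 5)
The paper does not prove this statement: it is quoted directly from Avron and Toledo (Theorem~5.2 of the reference \texttt{avron2011randomized}), and the text simply cites it. So there is no in-paper proof to compare against. Your argument is a correct self-contained derivation of the cited bound, and it follows the standard route: diagonalize $A$, use rotational invariance to reduce $mH_m(A)$ to a nonnegatively weighted sum of $mn$ independent $\chi_1^2$ variables, and invoke a sharp sub-exponential tail bound for such sums (here, Laurent--Massart), together with the PSD-specific inequalities $\|A\|_F \le \Tr(A)$ and $\|A\|_2 \le \Tr(A)$ that convert the Frobenius/spectral control into a relative-error bound. The Avron--Toledo proof you would be replacing is in fact the Chernoff/MGF argument you sketch in your final sentence, so your ``self-contained alternative'' is essentially a reconstruction of the original, and the Laurent--Massart route is the same Chernoff bound in packaged form.

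One small point worth flagging explicitly: your closing constant check, $2\sqrt{t/m} + 2t/m < \varepsilon/\sqrt{5} + \varepsilon^2/10$, gives a quantity strictly below $\varepsilon$ only when $1/\sqrt{5} + \varepsilon/10 < 1$, i.e.\ $\varepsilon < 10 - 2\sqrt{5} \approx 5.5$. The theorem as stated allows any $\varepsilon > 0$, so a fully rigorous write-up should either restrict to $\varepsilon \le 1$ (the only regime of interest, and the implicit regime in Avron--Toledo as well) or observe that for $\varepsilon \ge 1$ the lower tail is vacuous and handle the one-sided upper tail separately. This is a cosmetic gap, not a conceptual one, and the constant $20$ does indeed leave ample slack in the relevant regime.
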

We formulate a continuous analogue of Hutchinson's estimator for the trace of a function $f:\Omega \times \Omega \to \R$ in~\cref{sec:Results} and prove an analogue of~\cref{thm:HutchinsonBound} in~\cref{sec:ContHutch} (see~\cref{cor:ContHutch2}). 

\subsection{Randomized Range Finder} \label{sec:randSVD}
A randomized range finder approximates the column space of a matrix $A\in\mathbb{R}^{n\times n}$ by constructing an approximate basis from matrix-vector products with random vectors (e.g. standard Gaussian vectors)~\cite{rSVD}.

\begin{algorithm}
    \caption{Randomized Range Finder} \label{rSVD_alg}
    \begin{algorithmic}[1]
        \Statex \textbf{Input:} $A \in \R^{n_1\times n_2}$, target rank $k$, and oversampling parameter $p$
        \Statex \textbf{Output:} Matrix with orthonormal columns $Q_k \in \R^{n_1 \times (k+p)}$, whose range approximates the range of $A$
     
        \State Sample a matrix $G \in \R^{n_2 \times (k+p)}$ with i.i.d.~entries drawn from a standard Gaussian distribution.
        \State Compute $Y = AG$.
        \State Compute the QR factorization $Y = Q_k R$.
    \end{algorithmic} 
    
\end{algorithm}
More precisely, given a target rank $k$ and a small oversampling parameter $p$, the randomized range finder uses $k+p$ matrix-vector products to approximate the span of the dominant $k$ singular vectors of $A$ with high probability (see~\cref{rSVD_alg})~\cite{rSVD}.

The randomized range finder offers another way to estimate ${\rm tr}(A)$. If $Q$ is the matrix with orthonormal columns computed with the randomized range finder in~\cref{rSVD_alg} and $A$ is well-approximated by a rank $k$ matrix, then $QQ^T A \approx A$. Therefore, $\Tr(A) \approx \Tr(QQ^T A) = \Tr(Q^T A Q)$ by the cyclic property of trace. The matrix $Q^T A Q$ is of size $(k+p) \times (k+p)$, which is typically much smaller than $A$ itself. This technique is particularly effective when the singular values of $A$ decay rapidly. 

\subsection{Hutch++} \label{sec:Hutch++}
Since Hutchinson's estimator and the randomized range finder are most effective in complimentary circumstances, they can be combined to obtain an improved estimate for the trace of a matrix.  $\Hutch++$ does this by splitting a symmetric PSD matrix $A$ into two terms: (1) $QQ^T A QQ^T$ and (2) $A - QQ^T A QQ^T$, where $Q$ is the orthonormal output of the randomized range finder in~\cref{rSVD_alg}~\cite{Hutch++}.  The trace of $QQ^T A QQ^T$ is computed exactly while the trace of $A - QQ^T A QQ^T$ is computed using Hutchinson's estimator. The result is an estimator that uses only $\mathcal{O}(1/\varepsilon)$ matrix-vector products to estimate $\Tr(A)$ to within a relative error bound of $\varepsilon$ (see~\cref{alg:Hutch++}).  
\begin{algorithm} 
    \caption{Hutch++} \label{alg:Hutch++}
    \begin{algorithmic}[1]
        \Statex \textbf{Input:} Matrix-vector products from a symmetric PSD matrix $A \in \R^{n\times n}$ and the number of queries $m$.
        \Statex \textbf{Output:} Estimate of $\Tr(A)$.
        
        \State Sample two random matrices, $S \in \R^{n \times m/3}$, $G \in \R^{n \times m/3}$, each with i.i.d.~Gaussian entries.
        \State Compute $Y = AS$.
        \State Compute the QR factorization $Y = QR$.
        \State \textbf{Return:} $\Hutch++(A) = \Tr(Q^T A Q) + \frac{3}{m} \Tr(G^T (I - QQ^T) A (I - QQ^T) G)$.
    \end{algorithmic}
\end{algorithm}
In particular, $\Hutch++$ has the following probabilistic error bound~\cite[Theorem 1.1]{Hutch++}: 
\begin{theorem} \label{thm:Hutch++}
    Let $\varepsilon>0$, $0<\delta <1$, and $A \in \mathbb{R}^{n\times n}$ be symmetric PSD.
    If $\Hutch++$ is used with $m = \mathcal{O}(\sqrt{\log(1/\delta)}/\varepsilon + \log(1/\delta))$ matrix-vector products, then with probability $\geq 1 - \delta$,
    \begin{equation*}
        | \Hutch++(A) - \Tr(A)| \leq \varepsilon \Tr(A).
    \end{equation*}
\end{theorem}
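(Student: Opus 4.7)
The plan is to reduce the claim to a decomposition-plus-concentration argument. The key identity is
\[
\Tr(A) = \Tr(QQ^T A QQ^T) + \Tr((I-QQ^T) A (I-QQ^T)),
\]
valid because the cross terms vanish by cyclicity of trace together with $QQ^T(I-QQ^T)=0$. Since $\Hutch++$ computes the first summand exactly (step~4 of \cref{alg:Hutch++}), all the error comes from Hutchinson's estimate of the residual $B := (I-QQ^T) A (I-QQ^T)$, which is itself PSD.

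First I would show that $B$ has small Frobenius norm. The randomized range finder with $m/3$ Gaussian probes and target rank $k \approx m/6$ satisfies the Halko--Martinsson--Tropp bound $\|A - QQ^T A\|_F \le C \|A - A_k\|_F$ with high probability, where $A_k$ is the best rank-$k$ approximation of $A$. For a PSD matrix one has
\[
\|A - A_k\|_F^2 = \sum_{i>k}\sigma_i(A)^2 \le \sigma_{k+1}(A) \sum_{i>k}\sigma_i(A) \le \frac{\Tr(A)^2}{k},
\]
using $\sum_{i>k}\sigma_i(A) \le \Tr(A)$ and $k\,\sigma_{k+1}(A) \le \Tr(A)$. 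Combining these gives $\|B\|_F \le \|A - QQ^T A\|_F \le C'\,\Tr(A)/\sqrt{m}$ on a high-probability event $\mathcal{E}_1$.

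Next I would apply Hutchinson's concentration inequality to $B$ conditional on $\mathcal{E}_1$. Because the remaining $m/3$ Gaussian probes $G$ are independent of $Q$, a Hanson--Wright-type bound yields
\[
|H_{m/3}(B) - \Tr(B)| \le C\sqrt{\frac{\log(1/\delta)}{m}}\,\|B\|_F + C\,\frac{\log(1/\delta)}{m}\,\|B\|_2
\]
with probability at least $1-\delta/2$. Using $\|B\|_2 \le \Tr(A)$ and $\|B\|_F \le C'\,\Tr(A)/\sqrt{m}$, a union bound gives total error bounded by $C_1 \sqrt{\log(1/\delta)}\,\Tr(A)/m + C_2 \log(1/\delta)\,\Tr(A)/m$. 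Setting this $\le \varepsilon \Tr(A)$ and solving for $m$ yields $m = \Omega(\sqrt{\log(1/\delta)}/\varepsilon + \log(1/\delta))$, as claimed.

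The main obstacle is coordinating the two random stages: the range finder and the Hutchinson estimator each consume a fraction of the failure probability budget, and one must condition on the high-probability quality of $Q$ before invoking Gaussian concentration for the independent probes $G$. The essential improvement over~\cref{thm:HutchinsonBound} comes from exploiting the PSD structure through the $\Tr(A)/\sqrt{k}$ tail estimate, which produces the $\sqrt{\log(1/\delta)}/\varepsilon$ scaling of matrix-vector products rather than the $\log(1/\delta)/\varepsilon^2$ scaling of vanilla Hutchinson. Balancing the two error contributions so that spending roughly $m/3$ probes on each of the range finder and Hutchinson stages is optimal, and tracking the constants cleanly through the conditioning, is where the care is required.
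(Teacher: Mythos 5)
The paper does not prove this theorem; it cites it directly from the $\Hutch++$ paper of Meyer--Musco--Musco--Woodruff, so there is no in-paper proof to compare against. That said, your outline is the standard $\Hutch++$ argument and it parallels the continuous adaptation the paper develops in \cref{sec:ContHutch++}: the split $\Tr(A) = \Tr(QQ^TAQQ^T) + \Tr((I-QQ^T)A(I-QQ^T))$ with vanishing cross terms matches the choice of $F_1, F_2$ there, your $\|A-A_k\|_F \le \Tr(A)/\sqrt{k}$ bound is exactly \cref{thm:rank_k}, and feeding the range-finder error into a Hanson--Wright concentration for the residual is exactly the structure of \cref{thm:Z1} and \cref{cor:ContHutch++}. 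One bookkeeping point to tighten: in controlling the spectral-norm term of Hanson--Wright, bounding $\|B\|_2 \le \Tr(A)$ as you wrote forces $m \gtrsim \log(1/\delta)/\varepsilon$, which is worse than the claimed additive $\log(1/\delta)$. The correct move is $\|B\|_2 \le \|B\|_F$, so that once $m \gtrsim \log(1/\delta)$ the spectral-norm term is absorbed into the Frobenius-norm term; that absorption condition is precisely the source of the additive $\log(1/\delta)$ in the sample complexity. You should also flag explicitly that the range finder needs oversampling $p \gtrsim \log(1/\delta)$ to deliver its bound with failure probability $\delta/2$, which contributes to the same additive term.
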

    
\Cref{sec:Results} extends the $\Hutch++$ estimator to one for continuous functions $f:\Omega \times \Omega \to \R$. Our algorithm follows the same general steps, but some added complications result from using functions instead of matrices. We derive a rigorous bound on the number of operator-function products required (see~\cref{cor:ContHutch++}) and argue in~\cref{sec:ContHutch++} that, in a suitable parameter regime, the continuous analogue achieves the same optimal scaling as the finite-dimensional result in~\cref{thm:Hutch++}.

\section{Trace Estimation for Functions} \label{sec:Results}
This section introduces the algorithms and corresponding error bounds for a continuous version of Hutchinson's estimator (see~\cref{sec:IntroContHutch}) and $\ContHutch++$ (see~\cref{intro_ContHutch++}). We defer the analysis of these estimators to~\cref{sec:ContHutch} and~\cref{sec:ContHutch++}, respectively, and focus on the algorithms and results here. 
To develop continuous versions of Hutchinson's estimator and $\Hutch++$, we must first establish continuous versions of the mechanics used for the discrete estimators. We describe Gaussian processes, which are the continuous analogues of random vectors, in~\cref{sec:GP} and quasimatrices, which are continuous analogues of matrices, in~\cref{sec:quasimatrix}. 

\subsection{Gaussian Processes} \label{sec:GP}
A function, $g$, drawn from a Gaussian process (GP) is an infinite dimensional analogue of a vector drawn from a multivariate Gaussian distribution in the sense that samples from $g$ follow a multivariate Gaussian distribution. More precisely, we write $g \sim \mathcal{GP}(0,K)$ for some continuous positive definite kernel $K:\Omega \times \Omega \to \R$ if for any $x_1, \ldots, x_n \in \Omega$,
$(g(x_1), \ldots, g(x_n))$ follows a multivariate Gaussian distribution with mean $(0, \ldots, 0)$ and covariance $K_{ij} = K(x_i, x_j)$ for $1 \leq i,j \leq n$.

We are particularly interested in the squared exponential covariance kernel 
\[
K_{\mathrm{SE}}(x,y) = \frac{1}{\ell \sqrt{2 \pi}}\exp\left( -(x-y)^2/(2\ell^2) \right),
\]
where $s_\ell = \ell \sqrt{2 \pi}$ is a scaling factor chosen such that $\int_\mathbb{R} K_{SE}(0,y) dy = \int_\mathbb{R} \frac{1}{s_\ell} e^{-y^2/(2 \ell ^2)} dy = 1$.
The length scale parameter $\ell$ determines how correlated samples of $g$ are. If $\ell$ is large, then the samples $g(x_1), \ldots, g(x_n)$ are highly correlated and $g$ is close to a constant function. If $\ell$ is small, then samples of $g$ are only weakly correlated and $g$ is usually a highly oscillatory function. In particular, functions $g \sim \mathcal{GP}(0,K_{SE})$ typically require more computational degrees of freedom to resolve as $\ell$ decreases. 

Notice that GPs produce continuous functions. Thus, if $g$ is drawn from $\mathcal{GP}(0,K)$, then there is a positive correlation between $g(x)$ and $g(y)$. This contrasts with the random vectors that are commonly used in Hutchinson's estimator and $\Hutch++$, which have independent entries. Thus, our estimates have extra quantities to account for this correlation, depending on $\ell$ and $\|K_{SE}\|_{op}$ (the largest singular value of $K_{SE}$). In the limit $\ell\rightarrow 0$, $K_{SE}$ approximates the identity so that $\|K_{SE}\|_{op} = 1 + o(\ell)$. Moreover, $\|K_{SE}\|_{op}\leq 1$ is implied by Young's convolution inequality. In statements below, one can view $\|K_{SE}\|_{op}$ as some modest constant for each $\ell$ that approaches 1 from below as $\ell\rightarrow0$.

The algorithms in this paper are compatible with any family of symmetric positive definite kernels approximating the idenity. However, their performance and approximation properties will depend on the kernel of choice. We focus on the squared exponential kernel to obtain explicit error bounds for the performance of our estimator, but state a few key results in terms of a generic GP with kernel $K$.

\subsection{Quasimatrices} \label{sec:quasimatrix}
Quasimatrices are an infinite dimensional analogue of tall-skinny matrices~\cite{townsend2015continuous}.  Let $\Omega_1,\Omega_2\subseteq\R$ be two domains and denote by $L^2(\Omega_{1})$ the space of square-integrable functions defined on $\Omega_{1}$.   Many of the results in this paper are easier to state using quasimatrices. 
We say that $\mtx{B}$ is an $\Omega_1\times k$ quasimatrix, if $\mtx{B}$ is a matrix with $k$ columns where each column is a function defined on $\Omega_1$. That is, 
\begin{equation*}
    \mtx{B} = \begin{bmatrix} b_1 \, | & \! \cdots \! & | \, b_k \end{bmatrix}, \qquad b_j:\Omega_1 \to \R.
\end{equation*}
Throughout this paper, we assume that the columns of $B$ are square-integrable (i.e., $b_j \in L^2(\Omega_1)$).

Quasimatrices are useful for defining analogues of matrix operations for Hilbert--Schmidt (HS) operators~\cite{de1991alternative,stewart1998matrix,townsend2015continuous,trefethen1997numerical}. 
For example, if $F:L^2(\Omega_1)\to L^2(\Omega_2)$ is a HS operator, then we write $F\mtx{B}$ to denote the quasimatrix obtained by applying $F$ to each column of $\mtx{B}$. 
Moreover, we write $\mtx{B}^*\mtx{B}$ and $\mtx{B}\mtx{B}^*$ to mean the following:
\begin{equation*}
    \mtx{B}^*\mtx{B} = \begin{bmatrix}\langle b_1,b_1 \rangle & \cdots & \langle b_1,b_k \rangle\\ \vdots & \ddots &  \vdots\\
    \langle b_k,b_1 \rangle & \cdots & \langle b_k,b_k \rangle \end{bmatrix}, \qquad \mtx{B}\mtx{B}^* = \sum_{j=1}^k b_j(x)b_j(y),
\end{equation*}
where $\langle \cdot, \cdot \rangle$ is the $L^2(\Omega_1)$ inner-product.  
A quasimatrix $Q$ has orthonormal columns if $Q^*Q$ is the identity matrix.
Many operations for rectangular matrices in linear algebra, such as the QR factorization, can be generalized to quasimatrices~\cite{townsend2015continuous}.

\subsection{Continuous Hutchinson's Estimator} \label{sec:IntroContHutch}
We now extend Hutchinson's estimator to trace estimation of continuous functions $f:\Omega \times \Omega \to \mathbb{R}$, where $\Omega = [a,b]$ is an interval in $\R$ with $-\infty < a<b < \infty$. That is, we want to approximate ${\rm tr}(f) = \int_\Omega f(x,x)dx$. In the discrete case, Hutchinson's estimator can use i.i.d.~standard Gaussian vectors to approximate ${\rm tr}(A)$. Thus, we propose using functions drawn from a GP to estimate the trace of a function. We propose the estimator
\begin{equation} \label{eq:hutch1}
    H_m(f) = \frac{1}{m} \sum_{i=1}^m \int_\Omega \int_\Omega g_i(x) f(x,y) g_i(y) dxdy,
\end{equation}
where each $g_i$ is independently drawn from $\mathcal{GP}(0,K)$. We can express the estimator in~\cref{eq:hutch1} in terms of quasimatrices. Let $F$ be the integral operator corresponding to the function $f \in L^2(\Omega \times \Omega)$ such that $(Fg)(x) = \int_\Omega f(x,y) g(y) dy$ for all $g \in L^2(\Omega)$ and $x \in \Omega$. Then, if $G$ is the $\Omega \times m$ quasimatrix whose $i$th column is $g_i$, we can rewrite the continuous Hutchinson's estimator as 
\begin{equation} \label{eq:cont_hutch_est}
    H_m(f) = \frac{1}{m} \sum_{i=1}^m [G^* (F G)]_{ii} = \frac{1}{m} \Tr(G^* (F G)),
\end{equation} 
where the operator-function product $FG$ must be computed before applying $G^*$ from the left.
Written in this form, we see that the continuous Hutchinson's estimator takes the same form as the discrete Hutchinson's estimator, except here, $FG$ requires integration instead of matrix-vector products.  

It can be shown that the continuous Hutchinson's estimator in~\cref{eq:cont_hutch_est} converges to $\Tr(f)$ as $\ell \to 0$. 
For a fixed $\ell>0$, note that $\mathbb{E}[H_m(f)]$ is not precisely $\Tr(f)$ because of the covariance kernel $K$ in the Gaussian process used to generate the functions $g_i$. 
However, as $\ell \to 0$, we find that $\E[H_m(f)] \to \Tr(f)$ (see~\cref{cor:ContHutch2}).
In particular, if $f$ is Lipschitz continuous and the squared exponential kernel is used with parameter $\ell = \mathcal{O}(\varepsilon/\sqrt{\log(1/\varepsilon)})$ then we have $| \E[H_m(f)] - \Tr(f)| < \varepsilon \|f\|_{L^2}$.
Furthermore, if $m = \mathcal{O}(\log(1/\delta) / \varepsilon^2)$ operator-function products are used, then $\mathbb{P} \left[ \left| H_m(f) - \mathrm{tr}(f) \right| \geq \varepsilon \|f\|_{L^2} \right] \leq \delta$ (see~\cref{thm:ContHutch1}).
For PSD functions, we have that $\|f\|_{L^2} \leq \Tr(f)$ so that we may write this bound in terms of $\Tr(f)$ to obtain a relative error bound.

\begin{figure}
    \centering
    \begin{minipage}{0.48\textwidth}
        \begin{overpic}[width=\textwidth]{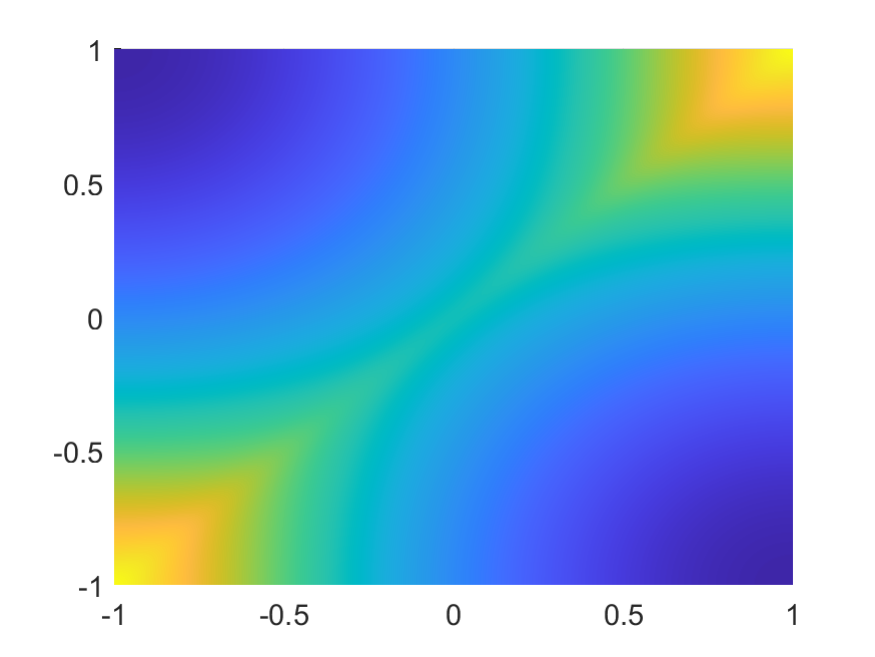}
        \put (50,-2) {$\displaystyle x$}
        \put (0,39) {$\displaystyle y$}
        \end{overpic}
    \end{minipage}
    \begin{minipage}{0.48\textwidth}
        \begin{overpic}[width=\textwidth]{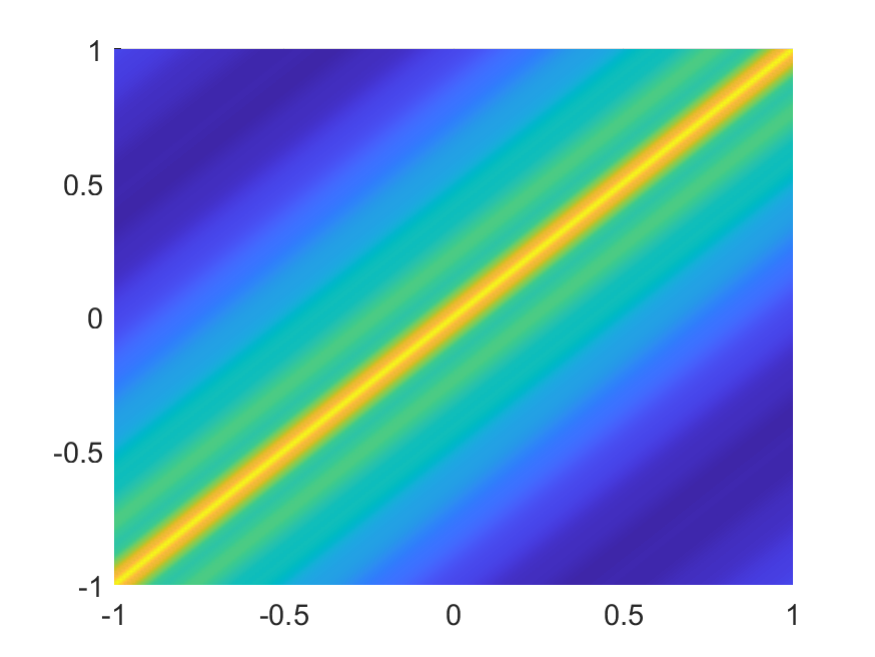}
        \put (50,-2) {$\displaystyle x$}
        \put (0,39) {$\displaystyle y$}
        \end{overpic}
    \end{minipage}
    \caption{\label{fig:toy_example1} Two symmetric positive definite kernels are used for trace estimation experiments: the Helmholtz-like kernel in~\cref{eqn:kernel1} (left panel) and the combination of sinc kernels in~\cref{eqn:kernel2} (right panel).}
\end{figure}

\begin{figure}
    \centering
    \begin{minipage}{0.48\textwidth}
        \begin{overpic}[width=\textwidth]{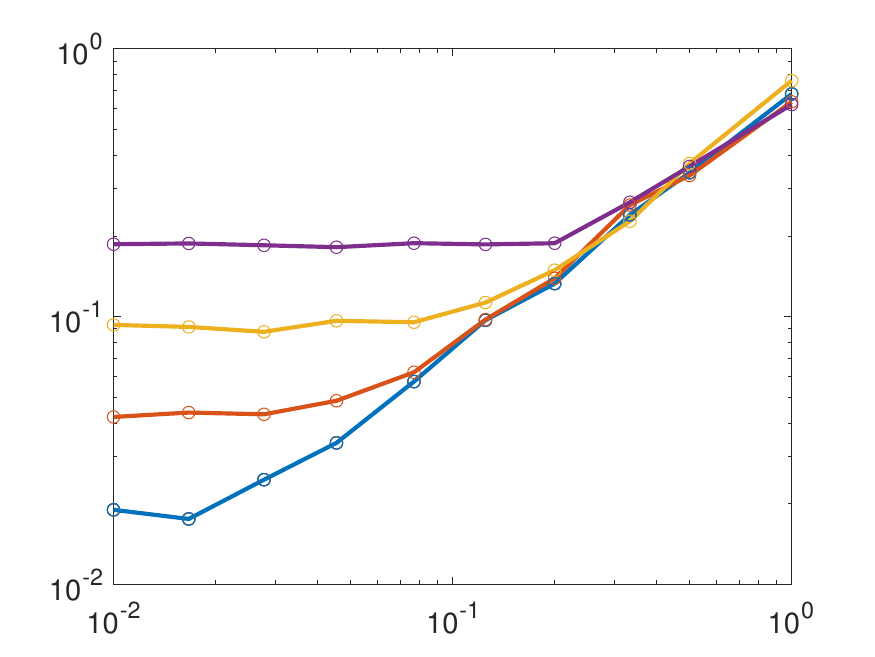}
        \put (45,-4) {$1/\sqrt{m}$}
        \put (0,15) {\rotatebox{90} {$\displaystyle \mathbb{E}[|H_m(f) - \Tr(f)|/\Tr(f)]$}}
        \put (20,49) {$\ell = 0.2$}
        \put (19,39) {$\ell = 0.1$}
        \put (18,30) {$\ell = 0.05$}
        \put (15,18) {\rotatebox{15} {$\ell = 0.025$}}
        \end{overpic}
    \end{minipage}
    \begin{minipage}{0.48\textwidth}
        \begin{overpic}[width=\textwidth]{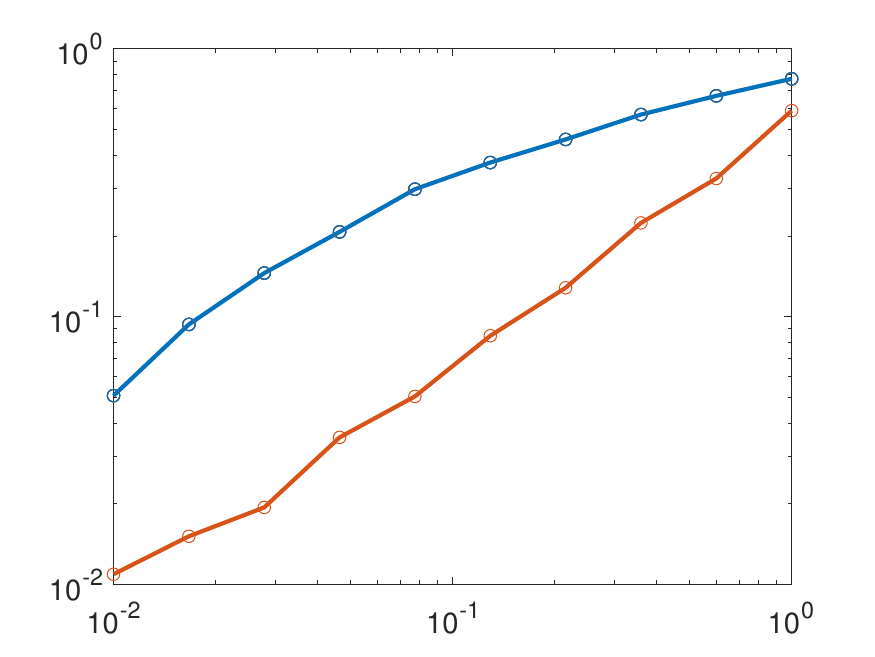}
        \put (48,-4) {$\displaystyle \ell$}
        \put (0,15) {\rotatebox{90} {$\displaystyle \mathbb{E}[|H_m(f) - \Tr(f)|/\Tr(f)]$}}
        \put (18,42) {\rotatebox{28} {Sinc mixture}}
        \put (17,17) {\rotatebox{32} {Helmholtz-like}}
        \end{overpic}
    \end{minipage}
    \caption{\label{fig:toy_example1_conv} The continuous analog of Hutchinson's estimator converges in mean up to a limiting bias determined by the covariance parameter $\ell>0$ for the Gaussian process, as demonstrated for the Helmholtz-like kernel in~\cref{eqn:kernel1} (left panel). This limiting bias decreases at a controlled rate as $\ell\rightarrow 0$ when $m$ is selected adaptively to balance the sample error and covariance error (right panel). The precise rate of decrease depends on the kernel's modulus of continuity (see~\cref{thm:expectation1}).}
\end{figure}

To demonstrate the convergence of the continuous Hutchinson's estimator, we tested the estimator on two functions: a Helmholtz-like function (left panel of~\cref{fig:toy_example1})
\begin{align}\label{eqn:kernel1}
    f(x,y) &= \left(1 - \cos \left(\frac{\pi(x+1)}{4} \right) \right) \sin \left(\frac{\pi(y+1)}{4} \right)\\ &+ \frac{1}{1 + e^{5(x-y)}} \left(1 - \cos \left(\frac{\pi(y+1)}{4} \right) \sin \left(\frac{\pi (x+1)}{4} \right) \right), \nonumber 
\end{align}
and a combination of three sinc functions (right panel of~\cref{fig:toy_example1})
\begin{equation}\label{eqn:kernel2}
    f(x,y) = \sinc(x-y) + \frac{1}{2} \sinc(10(x-y)) + \frac{1}{4} \sinc(50(x-y)),
\end{equation}
 both shown in~\cref{fig:toy_example1}. The convergence results are shown in~\cref{fig:toy_example1_conv}. In the left panel, we see that the relative accuracy of the estimator for the trace of the Helmholtz-like function converges linearly in expectation with $1/\sqrt{m}$ up to some bias determined by the value of $\ell$. This bias decreases as $\ell$ decreases. A similar trend holds for the combination of sinc functions. The right panel shows the relative accuracy of the estimator for both test functions. We see in the right panel that the trace estimate for both functions converges to the exact value as $\ell$ decreases. The exact convergence rate depends on the function's modulus of continuity (see~\cref{thm:expectation1} for more details).

\subsection{ContHutch++} \label{intro_ContHutch++}
Just as $\Hutch++$ uses the randomized range finder to improve on Hutchinson's estimator, we can use the continuous randomized range finder~\cite{ContRSVD} to improve on the continuous Hutchinson's estimator for estimating the trace of a symmetric PSD function. 
Note that a function $f:\Omega \times \Omega \to \R$ is PSD if for any $x_1, \ldots, x_n \in \Omega$, the matrix $A \in \R^{n \times n}$ with 
$A_{ij} = f(x_i, x_j)$ is PSD. 
To do so, we split the integral operator $F$ corresponding to the function $f$ into two terms: (1) $QQ^* F QQ^*$ and (2) $F - QQ^* F QQ^*$, where $Q$ is the quasimatrix computed by the continuous randomized range finder applied to $F$.
The trace of the first term can be rewritten as the trace of a matrix and computed exactly as $\Tr(QQ^* F QQ^*) = \Tr(Q^* F Q)$.
The trace of the second term is approximated using the continuous Hutchinson's estimator (see~\cref{alg:ContHutch++}). 
Note that if $F$ is the integral operator with continuous, symmetric PSD kernel $f$, then $\Tr(F) = \Tr(f)$~\cite{brislawn1988kernels}.

\begin{algorithm}[t]
    \caption{\label{alg:ContHutch++} ContHutch++}
    \begin{algorithmic}[1]
        \Statex \textbf{Input:} Operator-function products with the integral operator $F$ associated with a continuous, symmetric, PSD function $f:\Omega \times \Omega \to \mathbb{R}$, number $m$ of queries, and length-scale parameter $\ell$.
        \Statex \textbf{Output:} Estimate of $\Tr(f)$.

        \State Sample two random $\Omega \times m/3$ quasimatrices, $S$ and $G$, whose i.i.d.~columns are each drawn from $\mathcal{GP}(0, K_{SE})$. 
        \State Compute $Y = FS$. 
        \State Compute the QR factorization $Y=QR$ to find an $\Omega \times m/3$ quasimatrix, $Q$, with orthonormal columns.
        \State \textbf{Return:} $\ContHutch++(f) = \Tr(Q^T F Q) + \frac{3}{m}\Tr(\tilde{G}^T (F \tilde{G}))$, where $\Tilde{G} = (I - QQ^T)G$. 
    \end{algorithmic}
\end{algorithm}
To illustrate the improved performance of $\ContHutch++$, we test it on the two example functions from~\cref{sec:IntroContHutch}. The spectral decay of both functions is shown in the left panel of~\cref{fig:toy_example2}. The singular values of the Helmholtz-like function initially decay much faster than the singular values of the sinc mixture function. $\ContHutch++$ converges at a slow but steady rate during the spectral plateaus but can also exploit spectral decay to achieve high accuracy (see~\cref{cor:ContHutch++}).

In analogy with the matrix case,~\cref{fig:toy_example2} illustrates that $\ContHutch++$ may be spectrally accurate. That is, $\ContHutch++$ may be much more accurate than the worst case sample rate in~\cref{cor:ContHutch++} indicates when the singular values of $f$ decay rapidly. This is due to a combination of three key facts: as $m$ increases, (1) $\Tr(Q^*FQ)$ is a spectrally accurate approximation to $\Tr(F)$, (2) $H_m(F-QQ^*FQQ^*)$ concentrates around its mean, and (3) continuous Hutchinson's bias is controlled relative to $\|F-QQ^*FQQ^*\|_{L^2}$ by the length parameter $\ell$. While (1) and (2) are similar to statements for $\Hutch++$, the bias due to $g\sim GP(0,K_{SE})$ makes (3) unique to the continuous setting and depends on the regularity of $f$ and its low rank approximations (see the discussion after~\cref{thm:expectation1}).

\begin{figure}
    \centering
    \begin{minipage}{0.48\textwidth}
        \begin{overpic}[width=\textwidth]{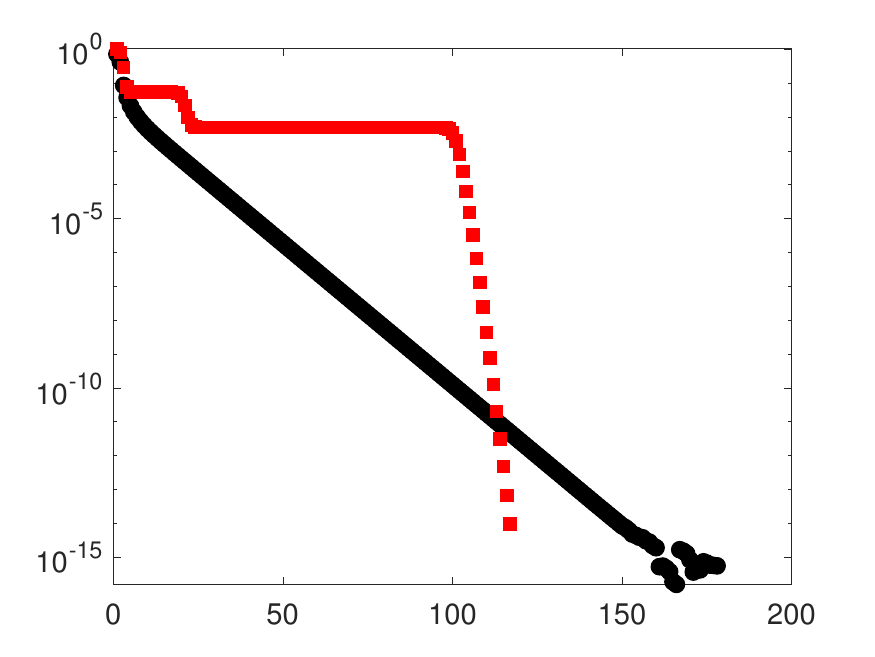}
        \put (50,-3) {$\displaystyle k$}
        \put (0,39) {$\displaystyle \sigma_k$}
        \put (26,63) {Sinc mixture}
        \put (25,44) {\rotatebox{-40} {Helmholtz-like}}
        \end{overpic}
    \end{minipage}
    \begin{minipage}{0.48\textwidth}
        \begin{overpic}[width=\textwidth]{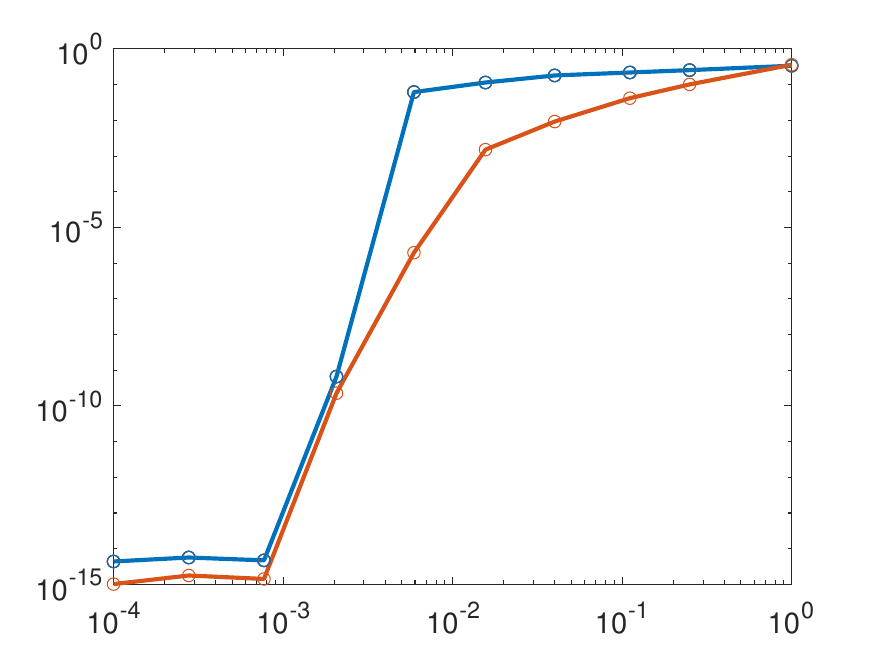}
        \put (48,-3) {$\displaystyle 1/m$}
        \put (-2,15) {\rotatebox{90} {$\displaystyle 
        \mathbb{E}[|H_m^{++}(f) - \Tr(f)|/\Tr(f)]$}}
        \put (34,36) {\rotatebox{75} {Sinc mixture}}
        \put (42,30) {\rotatebox{60} {Helmholtz-like}}
        \end{overpic}
    \end{minipage}
    \caption{\label{fig:toy_example2} The $\Hutch++$ estimator leverages spectral decay to compute ${\rm tr}(f)$ with high-accuracy. The spectrum of the two kernels is shown in the left panel, and the expected relative error in the trace (estimated from samples) is plotted against $1/m$ in the right panel (with $\ell=0.05$).}
\end{figure}

\section{Analyzing the Continuous Hutchinson's estimator} \label{sec:ContHutch}
We now turn to analyzing the continuous Hutchinson's estimator. \Cref{thm:expectation1} establishes that the length-scale parameter $\ell$ determines the bias of $H_m(f)$, i.e., how close the expected value of our estimate is to the correct trace. In particular, it shows that $\ell$ should scale with $\epsilon$ or the modulus of continuity of $f$, whichever is smaller, to achieve a bias of $\epsilon\|f\|_{L^2}$. For example, if $f$ is Lipschitz continuous in the second variable, then $d = \mathcal{O}(\varepsilon)$ as $\varepsilon\rightarrow 0$ and we take $\ell=\mathcal{O}(\varepsilon/\sqrt{\log(1/\varepsilon)})$ to achieve $|\mathbb{E}[H_m(f) - \Tr(f)| < \varepsilon \|f\|_{L^2}$. Continuous functions with less regularity, such as H\"{o}lder continuous families, may typically require smaller $\ell$ to achieve comparable accuracy. We denote the max norm by $\|f\|_\infty = \max_{x,y \in \Omega} |f(x,y)|$.
\begin{theorem} \label{thm:expectation1}
    Let $\varepsilon>0$ and $f:\Omega \times \Omega \to \mathbb{R}$ be a continuous function, where $\Omega = [a,b]$ with $-\infty < a<b < \infty$. Let $d = d(\varepsilon)$ be such that $|f(x,y) - f(x,z)| \leq \frac{\varepsilon \|f\|_{L^2}}{4(b-a)}$ for all $y,z \in \Omega$ such that $|y-z| < d$. If $H_m(f)$ is the continuous Hutchinson's estimate of $\Tr(f)$ with kernel $K_{SE}$ and length-scale parameter
    $\ell <  \min \left \{ \frac{d}{\sqrt{2 \log(8 \|f\|_{\infty} (b-a) / (\varepsilon \|f\|_{L^2}))}}, \frac{5 \varepsilon \|f\|_{L^2}}{26 \|f\|_\infty} \right \}$, then
$|\mathbb{E}[H_m(f)] - \Tr(f)| < \varepsilon \|f\|_{L^2}$.
\end{theorem}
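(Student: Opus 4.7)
The plan is to compute $\mathbb{E}[H_m(f)]$ exactly and then estimate its deviation from $\Tr(f)$ via a one-dimensional smoothing argument. Since the $m$ terms in the sum~\cref{eq:hutch1} are i.i.d.\ draws of the same functional, a Fubini swap combined with the identity $\mathbb{E}[g_i(x) g_i(y)] = K_{SE}(x,y)$ (the defining covariance of $\mathcal{GP}(0,K_{SE})$) gives
\[
\mathbb{E}[H_m(f)] = \int_\Omega \int_\Omega f(x,y)\, K_{SE}(x,y)\, dy\, dx.
\]
Writing $\alpha(x) = \int_\Omega K_{SE}(x,y)\, dy \in (0,1]$ and adding and subtracting $f(x,x)\alpha(x)$, the error splits as
\[
\mathbb{E}[H_m(f)] - \Tr(f) = \int_\Omega \int_\Omega [f(x,y)-f(x,x)]\, K_{SE}(x,y)\, dy\, dx + \int_\Omega f(x,x)[\alpha(x)-1]\, dx.
\]
The first integral is a smoothing error governed by the modulus of continuity of $f$; the second is a boundary error caused by $K_{SE}$ losing mass near $\partial\Omega$. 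The two constraints on $\ell$ in the hypothesis are designed to handle these sources independently, so I would allocate a budget of $\varepsilon/2$ to each.

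For the smoothing error I would fix $x$ and split the inner integral at $|y-x|=d$. On $|y-x|<d$ the modulus-of-continuity assumption gives $|f(x,y)-f(x,x)| \leq \varepsilon/(4(b-a))$, and since $\int_\Omega K_{SE}(x,y)\, dy \leq 1$, this contributes at most $\varepsilon/(4(b-a))$. On $|y-x|\geq d$ I would use the uniform bound $|f(x,y)-f(x,x)| \leq 2\|f\|_\infty$ together with the Gaussian tail estimate $\int_{|z|\geq d} K_{SE}(0,z)\, dz \leq e^{-d^2/(2\ell^2)}$. The hypothesis $\ell < d/\sqrt{2\log(8\|f\|_\infty(b-a)/\varepsilon)}$ is exactly the condition that forces this tail to be at most $\varepsilon/(8\|f\|_\infty(b-a))$, so the tail contribution is also at most $\varepsilon/(4(b-a))$. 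Integrating over $x\in\Omega$ gives a total smoothing error bounded by $\varepsilon/2$.

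For the boundary error the task is to show that $\int_\Omega (1-\alpha(x))\, dx$ scales linearly in $\ell$. By Fubini,
\[
\int_\Omega (1-\alpha(x))\, dx = \int_\Omega \int_{\mathbb{R}\setminus\Omega} K_{SE}(x,y)\, dy\, dx,
\]
and a change of variables $u = (x-y)/\ell$ combined with the standard identity $\int_0^\infty (1 - \Phi(s))\, ds = 1/\sqrt{2\pi}$ yields an estimate of the form $\int_\Omega (1-\alpha(x))\, dx \leq C\ell$ with an explicit constant $C$. The second hypothesis, $\ell < 5\varepsilon/(26\|f\|_\infty)$, is then calibrated precisely to force $\|f\|_\infty \cdot C\ell < \varepsilon/2$, which bounds the boundary contribution and completes the estimate.

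The main obstacle is bookkeeping: the three numerical thresholds in the hypothesis (the constant $\varepsilon/(4(b-a))$ in the modulus, the $8$ inside the logarithm, and the factor $5/26$ for the boundary) are dictated by how the $\varepsilon/2$ budgets are spent across the Gaussian tail bound and the boundary mass-loss integral, and these must be tracked carefully to ensure strict inequality in the final bound. Conceptually, however, no ingredient beyond Fubini, the covariance identity for $\mathcal{GP}(0,K_{SE})$, a Gaussian tail inequality, and the continuity hypothesis is needed.
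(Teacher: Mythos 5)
Your proof is correct and follows the same overall decomposition as the paper's: compute $\mathbb{E}[H_m(f)] = \int_\Omega\int_\Omega f(x,y)K_{SE}(x,y)\,dy\,dx$ from the covariance identity, then split the error into a smoothing term $\int_\Omega\int_\Omega[f(x,y)-f(x,x)]K_{SE}(x,y)\,dy\,dx$ and a boundary mass-loss term $\int_\Omega f(x,x)[\alpha(x)-1]\,dx$ (which is algebraically the same as the paper's pair $I_1,I_2$, since $1-\alpha(x)=\int_{\mathbb{R}\setminus\Omega}K_{SE}(x,y)\,dy$), and bound each by $\varepsilon/2$. Your treatment of the smoothing term is identical to the paper's: split the inner integral at $|y-x|=d$, use the modulus-of-continuity hypothesis on the near range, and a Gaussian tail bound $1-\erf(d/(\ell\sqrt{2}))\leq e^{-d^2/(2\ell^2)}$ on the far range, with the log-factor condition on $\ell$ providing exactly the needed tail decay.

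Where you genuinely diverge from the paper is the boundary term. The paper bounds $\int_\Omega\int_{\mathbb{R}\setminus\Omega}|f(x,x)|K_{SE}(x,y)\,dy\,dx$ by explicitly partitioning $\Omega\times(\mathbb{R}\setminus\Omega)$ into three corner/strip regions, uses pointwise kernel bounds and the elementary inequality $e^{-(x-y)^2/(2\ell^2)}\leq e^{-|x-y|/(\ell\sqrt{2})}$ when $|x-y|\geq\ell\sqrt{2}$, and after summing obtains $\frac{13}{5}\ell\|f\|_\infty$. You instead apply Fubini, change variables to normalize the kernel width, and invoke $\int_0^\infty(1-\Phi(s))\,ds=1/\sqrt{2\pi}$ to get $\int_\Omega(1-\alpha(x))\,dx\leq\frac{2}{\sqrt{2\pi}}\ell$, hence a boundary error $\leq\frac{2}{\sqrt{2\pi}}\ell\|f\|_\infty\approx 0.798\,\ell\|f\|_\infty$. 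This is both shorter and sharper: your constant is about $3\times$ smaller than the paper's $13/5$, so the hypothesis $\ell<\frac{5\varepsilon}{26\|f\|_\infty}$ is more than sufficient under your bound (you would only need $\ell<\frac{\sqrt{2\pi}}{4}\frac{\varepsilon}{\|f\|_\infty}$). Your closing remark that the factor $5/26$ is "calibrated precisely" to your estimate is the one small inaccuracy — it is calibrated to the paper's looser three-region bound, and is simply consistent with (but not tight for) your cleaner calculation.
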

\begin{proof}
The expectation of $H_1(f)$ is
    \begin{align*}
        \mathbb{E}[H_1(f)] &= \int_\Omega \int_\Omega f(x,y) \mathbb{E} [g_i(x) g_i(y)] dxdy = \int_\Omega \int_\Omega f(x,y) K_{SE}(x,y) dxdy,
    \end{align*} 
    since $K_{SE}(x,y) = \mathbb{E} \left[(g_i(x) - \mathbb{E}[g_i(x)]) (g_i(y) - \mathbb{E}[g_i(y)]) \right] = \mathbb{E}[g_i(x) g_i(y)]$.
    Thus, we find that
    \begin{align*}
        \mathbb{E}[H_m(f)] = \frac{1}{m} \sum_{i=1}^m \mathbb{E}[H_1(f)] = \mathbb{E}[H_1(f)] = \int_\Omega \int_\Omega f(x,y) \frac{1}{\ell \sqrt{2 \pi}} \exp (-(x-y)^2/(2\ell ^2)) dxdy.
    \end{align*}
Since $\int_\R K_{SE}(x,y) dy = 1$, we can write $f(x,x) = \int_\R f(x,x) K_{SE}(x,y) dy$.
    This means that we have,
    \begin{align}
        \left| \E[H_m(f)] - 
        \Tr(f) \right| 
        &\leq \int_\Omega \left| \int_\Omega f(x,y) K_{SE}(x,y) dy - f(x,x) \right| dx \nonumber \\
        &\leq \underbrace{\int_\Omega \left| \int_\Omega (f(x,y) - f(x,x))K_{SE}(x,y) dy \right| dx}_{I_1}
        + \underbrace{\int_\Omega \left| \int_{\R \setminus \Omega} f(x,x) K_{SE}(x,y) dy \right| dx}_{I_2}. \label{eq:bound2}
    \end{align}
We will show that $I_1 \leq \varepsilon \|f\|_{L^2}/2$ and $I_2 \leq \varepsilon \|f\|_{L^2}/2$ so that $|\E[H_m(f)] - \Tr(f)| \leq \varepsilon \|f\|_{L^2}$.
To evaluate $I_1$ in~\cref{eq:bound2}, we partition the domain of integration $\Omega \times \Omega$ into $\Omega \times \{y: |x-y|<d\}$ and $\Omega \times \{y: |x-y| \geq d\}$. We then use the fact that $|f(x,y) - f(x,x)| < \varepsilon \|f\|_{L^2}/(4(b-a))$ when $|x-y|<d$ to simplify the inner integral in $I_1$ to
    \begin{align*}
        I_1 &\leq \frac{1}{\ell \sqrt{2 \pi}} \int_\Omega \int_{|x-y|< d}  \frac{\varepsilon \|f\|_{L^2}}{4(b-a)} \exp (-(x-y)^2/(2\ell^2)) dy dx \\
        & \quad + \frac{1}{\ell \sqrt{2 \pi}} \int_\Omega \int_{|x-y|\geq d}  |f(x,y) - f(x,x)| \exp (-(x-y)^2/(2\ell^2)) dy dx.
    \end{align*}
    Since $\int_{|x-y|< d}  K_{SE}(x,y) dy \leq \int_\mathbb{R}  K_{SE}(x,y) dy = 1$ and $|f(x,y) - f(x,x)| \leq |f(x,y)| + |f(x,x)| \leq 2 \|f\|_\infty$, we have 
    \begin{align*}
        I_1 &\leq \int_\Omega \frac{\varepsilon \|f\|_{L^2}}{4(b-a)} dx + \frac{2 \|f\|_\infty}{\ell \sqrt{2 \pi}} \int_\Omega \int_{|x-y|\geq d} \exp (-(x-y)^2/(2\ell^2)) dy dx.
    \end{align*}
    By evaluating the first term exactly and bounding the second term, we find that 
    \begin{align*}
        I_1 &\leq \frac{\varepsilon \|f\|_{L^2}}{4} + 2 \|f\|_{\infty} (b-a) \left( 1 - \erf\left(\frac{d}{\ell \sqrt{2}} \right) \right) \\
        & \leq \frac{\varepsilon \|f\|_{L^2}}{4} + 2 \|f\|_{\infty} (b-a) \exp \left( -\frac{d^2}{2\ell^2} \right),
    \end{align*}
    where $\erf$ denotes the error function, and the second inequality comes from the fact that $1 - \erf(x) \leq \exp(-x^2)$ for $x>0$. Thus, we have that $I_1 \leq \varepsilon \|f\|_{L^2}/2$ since $\ell < d / \sqrt{2 \log(8 \|f\|_{\infty} (b-a) / (\varepsilon \|f\|_{L^2}))}$.
    
    To bound $I_2$ in~\cref{eq:bound2}, we partition the domain of integration $\Omega \times (\R \setminus\Omega)$ into three pieces $D_1$, $D_2$, and $D_3$ such that $\Omega \times (\R \setminus \Omega) = D_1 \cup D_2 \cup D_3$: 
    \begin{align*}
        D_1 &= \left([a, a + \ell \sqrt{2}] \times [a - \ell \sqrt{2}, a] \right)
        \cup \left( [b - \ell \sqrt{2}, b] \times [b, b + \ell \sqrt{2}] \right),\\
        D_2 &= \left((a + \ell \sqrt{2}, b] \times [a - \ell \sqrt{2}, a] \right)
        \cup \left( [a, b - \ell \sqrt{2}) \times [b, b + \ell \sqrt{2}] \right),\\
        D_3 &= \left( [a,b] \times (-\infty, a - \ell \sqrt{2}) \right) 
        \cup \left( [a,b] \times (b + \ell \sqrt{2}, \infty) \right).
    \end{align*}
    See~\cref{fig:Domains} for a depiction of these three domains. 
    \begin{figure}
        \centering
        \includegraphics[width = \linewidth]{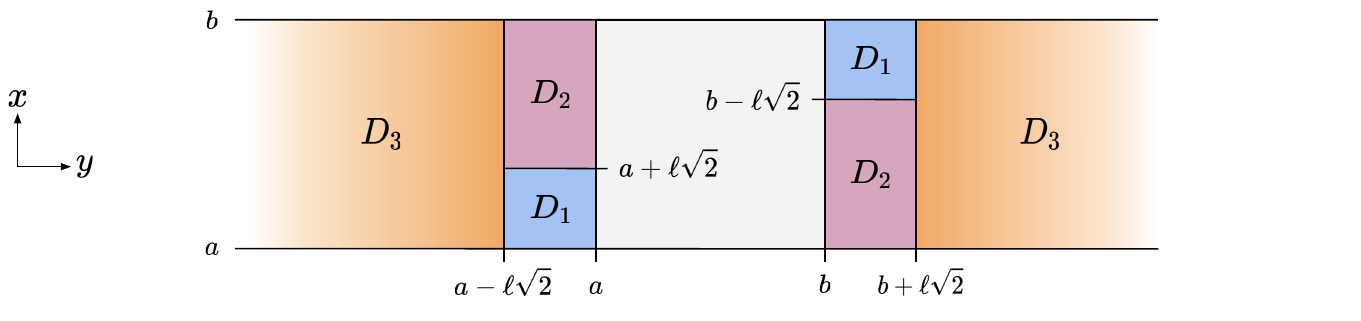}
        \caption{Domains of integration for computing the second term in~\cref{eq:bound2}.}
        \label{fig:Domains}
    \end{figure}
    On $D_1$, we use the fact that $f(x,x) \leq \|f\|_\infty$ for all $x \in \Omega$ and that $K_{SE}(x,y) \leq 1/\ell \sqrt{2\pi}$ to obtain a bound of $2\ell \sqrt{2/\pi} \|f\|_\infty$. In domains $D_2$ and $D_3$, we know that $|x-y|/(\ell \sqrt{2}) \geq 1$, so we have that $\exp\left(-(x-y)^2/(2\ell^2)\right) \leq \exp \left( - |x-y|/(\ell \sqrt{2}) \right)$. Using this bound, we can evaluate the integral over $D_2$ directly to get a bound of $2 \sqrt{2/\pi} \ell \|f\|_\infty ( e^{-1} - e^{-2} - e^{-(b-a)/(\ell \sqrt{2})} + e^{-(b-a + \ell \sqrt{2})/(\ell\sqrt{2})})$. Similarly, bounding the integral over $D_3$ gives a bound of $2 \sqrt{2/\pi} \ell \|f\|_\infty (e^{-1} - e^{-(b-a + \ell \sqrt{2})/(\ell \sqrt{2})})$. Combining our three error bounds, we find that 
    \begin{align*}
        \int_\Omega \left| \int_{\R \setminus \Omega} f(x,x) K_{SE}(x,y) dy \right| dx &\leq 2 \sqrt{\frac{2}{\pi}} \left( 1 + 2e^{-1} - e^{-2} - e^{-(b-a)/(\ell \sqrt{2})} \right) \ell \|f\|_\infty \\
        &\leq 2 \sqrt{\frac{2}{\pi}} \left( 1 + 2e^{-1} - e^{-2}\right) \ell \|f\|_\infty \\
        &\leq \frac{13}{5} \ell \|f\|_\infty \leq \varepsilon \|f\|_{L^2} / 2
    \end{align*}
    where the last inequality follows from $\ell < 5\varepsilon \|f\|_{L^2}/(26\|f\|_\infty)$. 
\end{proof}

When the continuous Hutchinson's estimator is used in conjunction with the randomized SVD in $\ContHutch++$ (c.f.~\cref{sec:ContHutch++}), the $L^2$ norm of $f$ in~\cref{thm:expectation1} may be very small. For Lipschitz continuous functions with constant $\alpha>0$ as above, \Cref{thm:expectation1} illustrates that $\ell$ should scale with $\epsilon \min\{\|f\|_{L^2}/\alpha,\|f\|_{L^2}/\|f\|_{\infty}\}$ to maintain an accuracy on the order of $\epsilon \|f\|_{L^2}$. When $f$ is reasonably smooth, then $\|f\|_{L^2}$ is typically comparable to $\|f\|_\infty$ and $\alpha$. Therefore, the bias of Hutchinson's estimator may remain on the order of $\|f\|_{L^2}$ over a large range of length scales. We believe this explains the phenomenon of spectral accuracy of $\ContHutch++$ observed in~\cref{fig:toy_example2} with $\ell=0.05$.

\subsection{Hanson--Wright inequality} 
The Hanson--Wright inequality~\cite{Ethan_HW_bound} is a result in probability theory that gives an upper bound on the tail probability of a quadratic form of independent, mean-zero subgaussian random vectors. Specifically, it states that if $x \in \mathbb{R}^n$ is a mean-zero, subgaussian random variable with cumulant generating function $\xi_x(t)$ satisfying $\xi_x(t) \leq \frac{1}{2} \nu t^2$ for some $\nu > 0$, and $\epsilon > 0$, then for any $t > 0$,
\begin{equation} \label{eq:discrete_HW}
    \mathbb{P} \left( \left|x^T A x - \mathbb{E}\left[x^T A x \right] \right|  \geq t \right) \leq \exp \left( - \frac{t^2}{80 \nu^2 \|A\|_F^2 + 16 \nu \|A\| t} \right).
\end{equation}

The inequality is beneficial for analyzing the behavior of random matrices and is often used in high-dimensional statistics and machine learning. In the context of Hutchinson's estimator, it is used to show that Hutchinson's estimate is accurate with exceptionally high probability~\cite{Hutch++}. We present a continuous version of the Hanson--Wright inequality to obtain a similar high-probability result. 

The following theorem tells us that if $f$ is a symmetric PSD function, we can bound the probability that the continuous Hutchinson's estimate is far from its expected value. To do so, we define the operator $\phi: L^2(\Omega) \to \R$ in terms of $f$ such that $\phi(g)$ is the continuous Hutchinson's estimate of $\Tr(f)$ with $m=1$. We first show that $\phi(g)$ is close to its expected value with high probability, then in~\cref{sec:ContHutch_bound} we use this result to show that the full continuous Hutchinson's estimate is close to $\Tr(f)$ with high probability.

\begin{theorem} \label{thm:HW}
Let $g\sim\mathcal{GP}(0, K)$ for some continuous, PSD kernel $K:\Omega \times \Omega \to \mathbb{R}$ and $f:\Omega \times \Omega \to \mathbb{R}$ be a symmetric, PSD, continuous function, where $\Omega = [a,b]$ with $-\infty< a<b < \infty$. Define $\phi(g) = \int_\Omega \int_\Omega g(x) f(x,y) g(y) dxdy$. Then, we have
    \[
    \begin{aligned}
        \mathbb{P} \left( \left|\phi(g) - \mathbb{E}[\phi(g)]  \right| \geq t \right) & \leq 2 \exp \left(- \frac{t^2}{80 \|f\|_{L^2}^2 \|K\|_{op}^2 + 16 \|f\|_{op} \|K\|_{op} t}  \right), \qquad t>0.
\end{aligned}\]
\end{theorem}
\begin{proof}
    See~\cref{sec:HW_appendix}.
\end{proof}

A similar result to~\cref{thm:HW} holds for all continuous functions that can be decomposed as $f = f_1 - f_2$, where $f_1, f_2$ are continuous PSD functions in $L^2(\Omega \times \Omega)$, with the same assumptions as in~\cref{thm:HW}.

\begin{corollary} \label{cor:HW}
Let $f:\Omega \times \Omega \to \R$ be any continuous function that can be written as $f = f_1 - f_2$, for continuous PSD functions $f_1, f_2$ in $L^2(\Omega \times \Omega)$. Then, 
\[
     \mathbb{P}\left( |\phi(g) - \E[\phi(g)]| \geq t  \right) \leq 4 \exp \left(- \frac{t^2}{ 80\|f\|_{L^2}^2 \|K\|_{op}^2 + 16\|f\|_{op} \|K\|_{op} t }  \right).
\]
\end{corollary}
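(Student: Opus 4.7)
The plan is to reduce to Theorem \ref{thm:HW} by splitting $\phi$ along the decomposition $f = f_1 - f_2$, then applying a union bound. Define
\[
\phi_i(g) = \int_\Omega \int_\Omega g(x)\,f_i(x,y)\,g(y)\,\d x\,\d y, \qquad i=1,2,
\]
so that linearity of integration gives $\phi(g) = \phi_1(g)-\phi_2(g)$ and, by linearity of expectation, $\E[\phi(g)] = \E[\phi_1(g)] - \E[\phi_2(g)]$.

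First, I would use the triangle inequality $|\phi(g)-\E[\phi(g)]| \le |\phi_1(g)-\E[\phi_1(g)]| + |\phi_2(g)-\E[\phi_2(g)]|$ to deduce that the event $\{|\phi(g)-\E[\phi(g)]|\ge t\}$ is contained in the union of the two events $\{|\phi_i(g)-\E[\phi_i(g)]|\ge t/2\}$. A union bound therefore yields
\[
\mathbb{P}\bigl(|\phi(g)-\E[\phi(g)]|\ge t\bigr) \le \sum_{i=1}^{2}\mathbb{P}\bigl(|\phi_i(g)-\E[\phi_i(g)]|\ge t/2\bigr).
\]
Next, since each $f_i$ is a continuous symmetric PSD function in $L^2(\Omega\times\Omega)$, Theorem \ref{thm:HW} applies directly to $\phi_i$ with threshold $t/2$. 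Summing the two resulting bounds produces the leading constant $54 = 2\cdot 27$ and replaces $t^2$ by $t^2/4$ and $t$ by $t/2$ inside the $\min$.

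The remaining step is to upgrade the individual kernel norms $\|f_i\|_{L^2}$ and $\|f_i\|_{\mathrm{op}}$ to the composite norms $\|f\|_{L^2}$ and $\|f\|_{\mathrm{op}}$ that appear in the statement. This is the only subtle point. Without loss of generality, the decomposition $f=f_1-f_2$ can be taken to be the Jordan decomposition induced by the spectral theorem applied to the self-adjoint HS operator $F$: write $F = F_+ - F_-$, where $F_\pm$ are the PSD operators built from the positive and negative parts of the eigenvalues of $F$, and let $f_i$ be the kernel of $F_i$. With this canonical choice, $\|F_\pm\|_{\mathrm{op}} \le \|F\|_{\mathrm{op}}$ (each eigenvalue of $F_\pm$ is either $0$ or an eigenvalue of $\pm F$), and the orthogonality of the two spectral projectors gives $\|f\|_{L^2}^2 = \|f_1\|_{L^2}^2 + \|f_2\|_{L^2}^2$, hence $\|f_i\|_{L^2}\le\|f\|_{L^2}$. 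Substituting these inequalities into the exponential bound above monotonically enlarges the right-hand side, producing the claimed form with $4\|f\|_{L^2}^2$ and $2\|f\|_{\mathrm{op}}$ in the denominators.

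The main obstacle is really just this last bookkeeping step: one must ensure the decomposition is chosen so that $\|f_i\|_{L^2}\le\|f\|_{L^2}$ and $\|f_i\|_{\mathrm{op}}\le\|f\|_{\mathrm{op}}$, since a generic splitting of $f$ into a difference of PSD kernels need not satisfy these bounds. Invoking the spectral (Jordan) decomposition of the associated self-adjoint HS operator is the cleanest fix and makes both inequalities immediate.
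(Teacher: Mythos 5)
Your proposal is correct and follows essentially the same route as the paper: a union bound over $\phi_1$ and $\phi_2$ with threshold $t/2$, then passage from $\|f_i\|$ to $\|f\|$ via the spectral (Jordan) decomposition of the self-adjoint operator, which is exactly what the paper does when it writes $\|\tilde f_j\|_{\rm op}\le|\lambda_1|=\|\tilde f\|_{\rm op}$ and $\|\tilde f_j\|_{L^2}^2\le\sum_i\lambda_i^2=\|\tilde f\|_{L^2}^2$. You are right to flag that a generic PSD splitting need not satisfy $\|f_i\|\le\|f\|$ and that the canonical spectral choice must be used; the paper implicitly relies on the same canonical decomposition (the one it constructs in the discussion following the corollary), and it additionally begins by symmetrizing $f$ to $\tilde f=\tfrac12(f(x,y)+f(y,x))$, a step that is moot under the corollary's stated hypotheses since $f=f_1-f_2$ with $f_1,f_2$ PSD is already symmetric.
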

\begin{proof}
    See~\cref{sec:HW_appendix}.
\end{proof}

In many cases, $f$ can be decomposed as $f = f_1 - f_2$ as required for~\cref{cor:HW}.
Note that defining $\phi$ in terms of a continuous function $f$ is the same as defining $\phi$ in terms of the symmetric function $\Tilde{f}(x,y) = \frac{1}{2} (f(x,y) + f(y,x))$. Thus, we can treat $f$ as symmetric for the following argument.
 When $f : \Omega \times \Omega \to \mathbb{R}$ is continuous and symmetric, the integral operator $F$ defined by $(Fg)(x) = \int_\Omega f(x,y)g(y)dy$ for $x \in \Omega$ is self-adjoint and compact. This means we can write down an orthonormal basis of eigenfunctions $e_1, e_2, \ldots $ and eigenvalues $\lambda_1, \lambda_2, \ldots$ such that $Fe_j = \lambda_j e_j$, where equality holds in the $L^2$ sense.  For eigenfunctions corresponding to nonzero eigenvalues, $e_j$ is a continuous function (by the same reasoning as in the proof of Mercer's Theorem). If there are a finite number of positive eigenvalues or a finite number of negative eigenvalues, then we can write $f$ as $f = f_1 - f_2$, where equality holds in the $L^2$ sense. 
Let $S = \{i : \lambda_i \geq 0\}$. 
Then $f_1$ and $f_2$ are given by $f_1(x,y) = \sum_{i \in S} \lambda_i e_i(x) e_i(y)$ and $f_2(x,y) = -\sum_{j \in \mathbb{N}\setminus S} \lambda_j e_j(x) e_j(y)$, which gives two PSD functions. 

If $F$ has an infinite number of positive and negative eigenvalues, we cannot choose such a decomposition for $f_1$ and $f_2$. However, if $f$ is Lipschitz continuous, then the eigendecomposition of $f$ is absolutely convergent~\cite{smithies} and we can decompose $f$ into two PSD functions as we did for the case of finitely many positive or negative eigenvalues. In practice, there are many Green's functions that do not have an infinite number of both positive and negative eigenvalues. For example, Green's functions corresponding to uniformly self-adjoint elliptic partial differential equations have infinitely many positive eigenvalues but, at most, a finite number of negative eigenvalues. 

\subsection{An Error Bound for the Continuous Hutchinson's Estimator} \label{sec:ContHutch_bound}
We now use our continuous Hanson--Wright inequality to prove that the continuous Hutchinson's estimator is accurate with exceptionally high probability. 
\begin{theorem} \label{thm:ContHutch1}
   Let $0<\delta<1$ and $f: \Omega \times \Omega \to \mathbb{R}$ be a continuous function with $\Omega = [a, b]$ and $-\infty < a < b < \infty$, where $f$ can be written as $f = f_1 - f_2$ for some continuous PSD functions $f_1, f_2: \Omega \times \Omega \to \R$.
    Let $m \geq 96 \log(4/\delta)$.
    Then, with probability $\geq 1-\delta$, 
\[
\left| H_m(f) - \mathbb{E}[H_m(f)] \right| \leq \|K\|_{op} \|f\|_{L^2} \sqrt{\frac{96 \log(4/\delta)}{m}},
\]
    where $H_m(f)$ is the continuous Hutchinson's estimate of $\mathrm{tr}(f)$ with $m$ probe functions drawn from $\mathcal{GP}(0,K)$ for some continuous PSD covariance kernel $K:\Omega \times \Omega \to \mathbb{R}$. 
\end{theorem}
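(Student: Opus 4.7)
The plan is to reduce the theorem to a concentration inequality for a sum of $m$ i.i.d.\ random variables $X_i := \phi(g_i) - \mathbb{E}[\phi(g_i)]$, each of which already satisfies the sub-exponential tail bound provided by Corollary~\ref{cor:HW}. Since $H_m(f) - \mathbb{E}[H_m(f)] = \frac{1}{m}\sum_{i=1}^m X_i$ and the $g_i$ are i.i.d.\ samples from $\mathcal{GP}(0,K_{SE})$, the $X_i$ are i.i.d., mean-zero, and each inherits the tail bound from Corollary~\ref{cor:HW} with the same constants.

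First I would convert the tail bound of Corollary~\ref{cor:HW} into a moment generating function (MGF) bound for each $X_i$. The min-of-two-terms form of the tail bound is the signature of a sub-exponential random variable, and by a standard argument (integrating the tail to bound moments and summing the exponential series), each $X_i$ admits an MGF bound of the form $\mathbb{E}[\exp(\lambda X_i)] \leq \exp(c_1 \lambda^2 \sigma^2)$ for $|\lambda| \leq 1/(c_2 b)$, with $\sigma^2 \propto \|f\|_{L^2}^2 \|K_{SE}\|_{op}$ and $b \propto \|f\|_{op}$. By independence the MGF of $S_m := \sum_{i=1}^m X_i$ is the product, so $S_m$ satisfies an MGF bound with parameter $m\sigma^2$ over the same range of $\lambda$.

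Next I would apply the Chernoff bound to $S_m$ to obtain a two-sided Bernstein-style tail estimate of the form
\[
\mathbb{P}(|S_m| \geq \tau) \leq 2\exp\!\left(-\min\!\left\{\tfrac{\tau^2}{c_3\, m\, \|f\|_{L^2}^2\, \|K_{SE}\|_{op}},\; \tfrac{\tau}{c_4\, \|f\|_{op}}\right\}\right),
\]
and then divide by $m$ to convert this into a tail bound for $|H_m(f) - \mathbb{E}[H_m(f)]| = |S_m|/m$. Setting $\tau = 2m\|f\|_{L^2}\sqrt{C\log(54/\delta)\|K_{SE}\|_{op}/m}$ makes the sub-Gaussian (quadratic-in-$\tau$) branch of the minimum equal to $\log(54/\delta)$, yielding failure probability at most $\delta$ after absorbing the numerical prefactor into the $54$.

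The step that requires care is verifying that the linear-in-$\tau$ (sub-exponential) branch of the minimum is \emph{not} active at this choice of $\tau$, so that the stated sub-Gaussian bound is truly what the Chernoff argument delivers. Since $\|f\|_{op} \leq \|f\|_{L^2}$ for Hilbert--Schmidt kernels, requiring $m \geq \log(54/\delta)/(4C\|K_{SE}\|_{op})$ forces $\tau$ to be small enough (relative to the sub-exponential scale $m\|f\|_{op}$) that the quadratic branch binds. I expect the main technical obstacle to be propagating the constants from Corollary~\ref{cor:HW} cleanly through the tail-to-MGF-to-tail chain so that the final constant $C = \max\{64, 54^2 8^4/\|K_{SE}\|_{op}\}$ matches. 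As an alternative that tracks constants more transparently, one could apply a direct truncation-plus-Bernstein argument (splitting each $X_i$ into a bounded part and a heavy tail controlled by Corollary~\ref{cor:HW}) to avoid the MGF detour entirely.
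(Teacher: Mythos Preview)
Your approach is correct in spirit and would yield the right qualitative bound, but it takes a genuinely different route from the paper. You treat $H_m(f)-\mathbb{E}[H_m(f)]$ as an average of $m$ i.i.d.\ centered sub-exponential random variables, pass through the tail\,$\to$\,MGF\,$\to$\,Chernoff pipeline, and arrive at a Bernstein-type inequality. The paper instead avoids summing independent copies altogether: it builds a \emph{block-diagonal} kernel $\bar f$ on the enlarged domain $\bar\Omega=[a,a+m(b-a)]$ by placing $m$ disjoint copies of $f$ along the diagonal, and concatenates the independent probe functions $g_1,\dots,g_m$ into a single function $\bar g$ on $\bar\Omega$. This rewrites $m\,H_m(f)$ as a single quadratic form $\phi_{\bar f}(\bar g)$, so Corollary~\ref{cor:HW} can be applied \emph{once} to $\bar f$. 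The norm relations $\|\bar f\|_{L^2}^2=m\|f\|_{L^2}^2$, $\|\bar f\|_{op}=\|f\|_{op}$, and $\|\bar K_{SE}\|_{op}=\|K_{SE}\|_{op}$ then deliver the $\sqrt m$ improvement in the sub-Gaussian branch automatically, with the constant $C$ carried through \emph{unchanged}. Your route, by contrast, would inevitably degrade the constant when converting the prefactor $54$ and the rate constant $C$ through the tail-to-MGF equivalence and back; you correctly flag this as the main obstacle, and it is a real one---matching the stated $C=\max\{64,54^2\,8^4/\|K_{SE}\|_{op}\}$ exactly via Bernstein is unlikely. The block-diagonal trick buys constant-preservation and a one-line reduction; your i.i.d.\ averaging argument buys conceptual familiarity and would generalize more readily to non-quadratic estimators, at the price of looser constants.
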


\begin{proof}
We first define new functions, $\Bar{f}$ and $\Bar{g}$, defined by block repetitions of $f$ and $g$, respectively. This will allow us to use the continuous Hanson--Wright inequality (see~\cref{cor:HW}) to bound the failure probability of the continuous Hutchinson's estimate of $\Tr(f)$ with $m$ probe functions. To do so, define the intervals $\Omega_i = [a + i(b-a), a + (i+1)(b-a))$ for $0\leq i \leq m-2$ and $\Omega_{m-1} = [a + (m-1)(b-a), a + m(b-a)]$. Define the union of these domains as $\Bar{\Omega} = \bigcup_{i=0}^{m-1} \Omega_i = [a, m(b - a) + a]$. Then, since the $\Omega_i$'s are disjoint, for any $x \in \Bar{\Omega}$, there exists a unique $0 \leq i \leq m-1$ such that $x \in \Omega_i$. Thus, we define our block extension of $f$ to be $\Bar{f}: \Bar{\Omega} \times \Bar{\Omega} \to \mathbb{R}$ such that
    \begin{equation*}
        \Bar{f}(x,y) = \begin{cases}
            f \left(x - i(b-a), y - i(b-a) \right), & \text{if } x, y \in \Omega_i, \\
            0, & \text{otherwise.}
        \end{cases}
    \end{equation*}
    Similarly, if $g_0, \ldots, g_{m-1}$ are i.i.d.~functions drawn from $\mathcal{GP}(0, K_{SE})$, we define $\Bar{g}:\Bar{\Omega} \to \mathbb{R}$ by
    \begin{equation*}
        \Bar{g}(x) = g_i(x - i(b-a)), \qquad x\in \Omega_i.
    \end{equation*}
    Note that $\Bar{g}$ then has the covariance kernel $\Bar{K}:\Bar{\Omega} \times \Bar{\Omega} \to \mathbb{R}$, which is defined as a block extension of the covariance kernel $K$ in the same way that $\Bar{f}$ is defined in terms of $f$. 

We can then rewrite the continuous Hutchinson's estimator in terms of $\Bar{f}$ and $\Bar{g}$ so that
    \begin{equation*}
        \frac{1}{m} \sum_{i=1}^m \int_\Omega \int_\Omega g_i(x) f(x,y) g_i(y) dx dy = \frac{1}{m} \int_{\Bar{\Omega}}\int_{\Bar{\Omega}} \Bar{g}(x) \Bar{f}(x,y) \Bar{g}(y) dx dy. 
    \end{equation*}
    Note that $\Bar{f}$ is only a continuous function if $f$ is zero on the boundary of $\Omega$. 
    However, the reason that~\cref{cor:HW} requires continuity is to apply Mercer's theorem to find an eigendecomposition of $f$.
    Since $f$ is continuous on $\Omega$ and $\Bar{f}$ is formed from blocks of $f$, we can still apply Mercer's theorem to $f$ and then form $\Bar{f}$ by zero-extending the eigenfunctions of $f$ appropriately. Thus, we can use the Hanson--Wright inequality even though $\Bar{f}$ is discontinuous.~\cref{cor:HW} tells us that
    \begin{equation} \label{eq:bound1}
        \mathbb{P}\left( |\phi_{\Bar{f}}(\Bar{g}) - \E[\phi_{\Bar{f}}(\Bar{g})]| \geq t  \right) 
        \leq 4 \exp \left(- \frac{t^2}{ 80\|\bar{f}\|_{L^2}^2 \|\bar{K}\|_{op}^2 +  16\|\bar{f}\|_{op} \|\bar{K}\|_{op} t }  \right),
    \end{equation}
    where $\phi_{\Bar{f}}(\Bar{g}) = \int_{\Bar{\Omega}}\int_{\Bar{\Omega}} \Bar{g}(x) \Bar{f}(x,y) \Bar{g}(y) dx dy$.
    Let $H_m(f)$ be defined as in~\cref{eq:hutch1}. 
    Furthermore, note that $\|\Bar{f}\|_{L^2}^2 = m \|f\|_{L^2}^2$, $\|\Bar{f }\|_{op} = \|f\|_{op}$, and $\|\Bar{K}\|_{op} = \|K\|_{op}$.
    Then, if we let $t' = t/m$, we can rewrite~\cref{eq:bound1} as 
    \begin{equation*}
        \mathbb{P}\left( \left|H_m(f) - \E[H_m(f)] \right| \geq t'  \right) \leq 4 \exp \left(- \frac{m t'^2}{80 \|f\|_{L^2}^2 \|K\|_{op}^2 +  16 \|f\|_{op} \|K\|_{op} t' }  \right).
    \end{equation*}
    Choose $t' = \|K\|_{op} \|f\|_{L^2} \sqrt{96 \log(4/\delta)/m}$. Then notice that since $m \geq 96 \log(4/\delta)$ and $\|f\|_{op} \leq \|f\|_{L^2}$, the second term in the denominator satisfies $\|f\|_{op} \|K\|_{op} t' \leq \|f\|_{L^2}^2 \|K\|_{op}^2$. Thus,
    \begin{equation*}
       \mathbb{P} \left( \left| H_m(f) - \E[H_m(f)] \right| \geq \|K\|_{op} \|f\|_{L^2} \sqrt{96 \log(4/\delta)/m}  \right) \leq 4 \exp \left(-\log (4/\delta)  \right) = \delta.
    \end{equation*}
\end{proof}

We can combine the result in~\cref{thm:ContHutch1} with our bound for $\E[H_m(f)]$ to obtain a probabilistic bound in terms of $\mathrm{tr}(f)$ when $K=K_{SE}$. Combining~\cref{thm:expectation1},~\cref{thm:ContHutch1}, and the triangle inequality establishes that $H_m(f)$ achieves an error of $\epsilon\|f\|_{L^2}$ with probability $1-\delta$ provided that the number of samples satisfies $m=\mathcal{O}(\log(4/\delta)/\epsilon^2)$ and the length scale satisfes $\ell=\mathcal{O}(\min\{d(\epsilon),\epsilon\})$.

\begin{corollary} \label{cor:ContHutch2}
     Let $f: \Omega \times \Omega \to \mathbb{R}$ be a continuous function with $\Omega = [a, b]$ and $-\infty < a < b < \infty$, where $f$ can be written as $f = f_1 - f_2$ for some continuous PSD functions $f_1, f_2: \Omega \times \Omega \to \R$.
    Let 
    \begin{align*}
         m \geq \max \left\{ \frac{384 \|K\|_{op}^2 \log(4/\delta)}{\varepsilon^2}, 96 \log(4/\delta) \right\}, \quad 
         \ell < \min \left \{ \frac{d(\varepsilon)}{\sqrt{2 \log\left( \frac{16 \|f\|_{\infty} (b-a)}{\varepsilon \|f\|_{L^2}} \right)}}, \frac{5 \varepsilon \|f\|_{L^2}}{52 \|f\|_\infty} \right\},
    \end{align*}
    where $d = d(\varepsilon) >0$ is such that $|f(x,y) - f(x,z)| \leq \frac{\varepsilon \|f\|_{L^2}}{8(b-a)}$ for all $y,z \in \Omega$ such that $|y-z| < d$.
    Then with probability $\geq 1-\delta$, 
    \begin{equation*} \label{eq:ContHutch}
        \left| H_m(f) - \mathrm{tr}(f) \right| \leq \varepsilon \|f\|_{L^2}.
    \end{equation*}
    where $H_m(f)$ is the continuous Hutchinson's estimate of $\mathrm{tr}(f)$ computed with $m$ probe functions drawn from $\mathcal{GP}(0, K_{SE})$
    where $K_{SE}$ has length-scale parameter $\ell$.
\end{corollary}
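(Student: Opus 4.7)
The plan is a direct application of the triangle inequality, splitting the target error into a bias term and a concentration term:
\begin{equation*}
\left| H_m(f) - \mathrm{tr}(f) \right| \;\leq\; \left| H_m(f) - \mathbb{E}[H_m(f)] \right| \;+\; \left| \mathbb{E}[H_m(f)] - \mathrm{tr}(f) \right|.
\end{equation*}
I would control the two pieces using \cref{thm:expectation1} and \cref{thm:ContHutch1} respectively, choosing each tolerance to be half of $\varepsilon \|f\|_{L^2}$ so they sum to the stated bound.

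For the bias term, I would feed \cref{thm:expectation1} a target tolerance of $\varepsilon \|f\|_{L^2}/2$ (or, more conservatively, $\varepsilon/2$, which dominates when $\|f\|_{L^2}\geq 1$). Rerunning the proof of \cref{thm:expectation1} with $\varepsilon/2$ in place of $\varepsilon$ replaces the factor $8\|f\|_\infty(b-a)/\varepsilon$ by $16\|f\|_\infty(b-a)/\varepsilon$ inside the logarithm and doubles the denominator in the Lipschitz-type condition, which is exactly what the hypothesis $\ell < \min\{d/(2\sqrt{2\log(8\|f\|_\infty(b-a)/\varepsilon)}),\; 5\varepsilon/(52\|f\|_\infty)\}$ is designed to absorb (the extra factor of $2$ and the $52$ vs.\ $26$ in the corollary account for halving $\varepsilon$ in \cref{thm:expectation1}). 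This yields $|\mathbb{E}[H_m(f)] - \mathrm{tr}(f)| \leq \varepsilon/2$ deterministically.

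For the concentration term, I would invoke \cref{thm:ContHutch1}, whose hypothesis $m \geq \log(54/\delta)/(4C\|K_{SE}\|_{op})$ is subsumed by the right-hand term in the $\max$ defining the corollary's lower bound on $m$. The theorem then gives, with probability $\geq 1-\delta$,
\begin{equation*}
\left|H_m(f) - \mathbb{E}[H_m(f)]\right| \;\leq\; 2\|f\|_{L^2}\sqrt{\frac{C\log(54/\delta)\|K_{SE}\|_{op}}{m}}.
\end{equation*}
The left-hand term in the $\max$ condition on $m$, namely $m \geq 16 C \|K_{SE}\|_{op} \log(54/\delta)/\varepsilon^2$, is chosen precisely so that the right-hand side above is at most $\varepsilon \|f\|_{L^2}/2$. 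Adding the two bounds completes the argument.

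The only real subtlety, and the step I would triple-check, is the bookkeeping at the interface of the two bounds: \cref{thm:expectation1} is stated with an \emph{absolute} tolerance $\varepsilon$ while the corollary's conclusion is \emph{relative} to $\|f\|_{L^2}$. To make these match cleanly one treats the regimes $\|f\|_{L^2}\geq 1$ and $\|f\|_{L^2}< 1$ separately, or equivalently uses the more conservative absolute bound $\varepsilon/2$ from \cref{thm:expectation1} paired with $\varepsilon\|f\|_{L^2}/2$ from \cref{thm:ContHutch1} and observes that the resulting sum is at most $\varepsilon\|f\|_{L^2}$ whenever $\|f\|_{L^2}\geq 1$, with the small-$\|f\|_{L^2}$ case handled by further tightening $\ell$ (which the stated hypothesis already permits, since the bound on $\ell$ does not depend on $\|f\|_{L^2}$). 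No other obstacle arises, and the proof itself is essentially two citations plus the triangle inequality.
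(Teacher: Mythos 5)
Your plan — triangle inequality, then \cref{thm:expectation1} for the bias and \cref{thm:ContHutch1} for the concentration, each at half the target — is exactly what the paper does; the paper's own ``proof'' is a single sentence saying the corollary follows by combining those two theorems with the triangle inequality, so you have reconstructed the intended argument.

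The one substantive issue is in your last paragraph. You correctly flag that \cref{thm:expectation1} gives an \emph{absolute} bias bound while the conclusion is \emph{relative} to $\|f\|_{L^2}$, but the proposed repair — ``handled by further tightening $\ell$, which the stated hypothesis already permits'' — does not close the gap. The corollary asserts the conclusion for \emph{every} $\ell$ satisfying the stated bound, and that bound does not involve $\|f\|_{L^2}$, so you cannot rescue the case $\|f\|_{L^2}<1$ by the reader privately choosing a smaller $\ell$: the statement must hold for the largest admissible $\ell$ too. With the stated hypotheses you genuinely only get $|H_m(f)-\mathrm{tr}(f)|\lesssim \varepsilon/2 + \varepsilon\|f\|_{L^2}/2 = \varepsilon(1+\|f\|_{L^2})/2$, which exceeds $\varepsilon\|f\|_{L^2}$ when $\|f\|_{L^2}<1$. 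A rigorous version would either scale the $\ell$-condition by $\|f\|_{L^2}$ or target the bias directly at $\varepsilon\|f\|_{L^2}/2$. This appears to be an imprecision already present in the paper's statement rather than something you introduced, but the claim that the given hypothesis ``absorbs'' it is not correct as written. Relatedly, note that the corollary's $d$ is defined with the original tolerance $\varepsilon/(4(b-a))$ (not $\varepsilon/(8(b-a))$) and the logarithm keeps the factor $8$ (not $16$); the extra factor of $2$ in the denominator of the $\ell$-bound is a different, cruder tightening than what re-running \cref{thm:expectation1} at $\varepsilon/2$ would literally produce, so it is worth verifying directly that it drives $I_1$ below roughly $\varepsilon/4$ rather than asserting the substitutions cancel.
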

When $f$ is symmetric PSD, we have that $\|f\|_{L^2}^2 = \sum_i \lambda_i^2 \leq \lambda_1 \sum_i \lambda_i = \lambda_i \mathrm{tr}(f) \leq \mathrm{tr}(f)^2$, where $\lambda_i$ is the $i$th eigenvalue of $F$. Thus, for symmetric PSD functions, the result above gives a relative accuracy for the continuous Hutchinson's estimate of the trace of $f$. 

\section{ContHutch++} \label{sec:ContHutch++}
For continuous, symmetric PSD functions $f$, we can improve the bound for Hutchinson's estimator by projecting off a low-rank approximation of $F$ corresponding to its first few largest eigenvalues. To understand why, note that Hutchinson's estimator error bound for symmetric PSD functions is only tight when the first few eigenvalues of $F$ are much larger than the rest so that $\mathrm{tr}(f) \approx \|f\|_{L^2}$. Thus, if we project off the first few eigenvalues of $F$, we can use Hutchinson's estimator to estimate the trace of the resulting function with smaller eigenvalues. We use the continuous randomized range finder~\cite{ContRSVD} to compute the approximate projection from the first few eigenfunctions. 

We begin with a uniform approximation bound for a symmetric, PSD, continuous function by its best rank $k$ approximation (e.g., its truncated SVD) in terms of $\mathrm{tr}(f)$. This is a continuous analogue of a key oservation used to obtain the improved scaling of Hutch++ in the matrix case~\cite[Lemma~3]{Hutch++}.
\begin{theorem} \label{thm:rank_k}
    Let $F_k$ be the best rank $k$ approximation to the integral operator $F$ defined by a symmetric, PSD, continuous function $f$. Then,
\[
        \|F - F_k\|_{HS} \leq \frac{1}{\sqrt{k}} \mathrm{tr}(f).
\]
\end{theorem}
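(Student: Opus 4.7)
The plan is to exploit the spectral structure of $F$ directly. Since $f$ is continuous, symmetric, and positive semidefinite on the compact square $\Omega\times\Omega$, the integral operator $F$ is compact, self-adjoint, and trace-class, and Mercer's theorem supplies a nonincreasing sequence of nonnegative eigenvalues $\sigma_1\geq\sigma_2\geq\cdots\geq 0$ with $\sum_{j=1}^{\infty}\sigma_j=\Tr(f)$. The best rank-$k$ approximation in Hilbert--Schmidt norm is the truncation to the top $k$ eigenpairs, so the first step is simply to identify
\[
\|F-F_k\|_{HS}^2 \;=\; \sum_{j=k+1}^{\infty}\sigma_j^2.
\]

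The second step is a sorting bound on $\sigma_{k+1}$. Because $\sigma_{k+1}\leq \sigma_j$ for every $j=1,\dots,k$, summing those $k$ inequalities gives $k\,\sigma_{k+1}\leq \sum_{j=1}^{k}\sigma_j\leq \Tr(f)$, hence $\sigma_{k+1}\leq \Tr(f)/k$. The third step is to pull this factor out of the tail of squares, using $\sigma_j^2\leq \sigma_{k+1}\sigma_j$ for all $j\geq k+1$, and then bound the remaining tail of eigenvalues by the full trace:
\[
\sum_{j=k+1}^{\infty}\sigma_j^2 \;\leq\; \sigma_{k+1}\sum_{j=k+1}^{\infty}\sigma_j \;\leq\; \sigma_{k+1}\,\Tr(f) \;\leq\; \frac{\Tr(f)^2}{k}.
\]
Taking square roots yields $\|F-F_k\|_{HS}\leq \Tr(f)/\sqrt{k}$, which is the claim.

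There is no serious analytic obstacle here. The only point that deserves care is the spectral representation used in the first display, which relies on the Mercer-type eigendecomposition of continuous PSD kernels already invoked implicitly in Section~\ref{sec:ContHutch} to split $f$ into PSD components; once that spectral picture is in place, the remaining two inequalities are an elementary sorting bound and a Markov-type estimate. One could slightly sharpen the constant by working with $k+1$ in place of $k$ in the monotonicity step, but the clean form $\Tr(f)/\sqrt{k}$ is exactly what is needed downstream in the \textsc{ContHutch++} analysis.
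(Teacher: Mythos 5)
Your proposal is correct and mirrors the paper's proof almost line for line: identify the Hilbert--Schmidt error with the tail sum of squared eigenvalues via Mercer's theorem, bound $\lambda_{k+1}$ by the average of the first $k$ eigenvalues (hence by $\mathrm{tr}(f)/k$), factor $\lambda_{k+1}$ out of the tail, and bound the remaining tail sum by the full trace. The only differences are cosmetic (notation $\sigma_j$ vs.\ $\lambda_j$).
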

\begin{proof}
    Since $f$ is symmetric PSD and continuous, its eigendecomposition exists and converges absolutely and uniformly by Mercer's Theorem. Let $\lambda_1 \geq \lambda_2 \geq ... \geq 0$ be the eigenvalues of $F$.
    Thus, we have that 
    \begin{equation*}
        \lambda_{k+1} \leq \frac{1}{k} \sum_{i=1}^k \lambda_i \leq \frac{1}{k} \mathrm{tr}(f).
    \end{equation*}
    We can use this result to bound the error in the best rank $k$ approximation by 
    \begin{align*}
        \|F - F_k \|_{HS}^2 &= \sum_{i=k+1}^\infty \lambda_i^2 
        \leq \lambda_{k+1} \sum_{i=k+1}^\infty \lambda_i 
        \leq \frac{1}{k} \mathrm{tr}(f) \sum_{i=k+1}^\infty \lambda_i 
        \leq \frac{1}{k} \mathrm{tr}(f)^2.
    \end{align*}
\end{proof} 

Given a decomposition of $\Tr(f)$ such that $\Tr(f) = \Tr(f_1) + \Tr(f_2)$ where the $L^2$ norm of $f_2$ is bounded with high probability, we show that the approximation of $\Tr(f)$ given by computing $\Tr(f_1)$ exactly and $\Tr(f_2)$ using the continuous Hutchinson's estimator gives an accurate trace estimate with high probability for sufficient choices of $m$ and $\ell$. We use $\lceil a\rceil$ to denote the smallest integer $\geq a$.

\begin{theorem} \label{thm:Z1}
    Let $f:\Omega \times \Omega \to \mathbb{R}$ be continuous, symmetric, and PSD, $\varepsilon > 0$, $0 < \delta < 1$, and $m, k \in \mathbb{N}$. Let $f_1, f_2 : \Omega \times \Omega \to \mathbb{R}$ be functions that satisfy $\mathrm{tr}(f) = \mathrm{tr}(f_1) + \mathrm{tr}(f_2)$ and $\|f_2\|_{L^2} \leq \eta \|F - F_k \|_{HS}$ with probability $1 - \delta/2$ for some $\eta$, where $F_k$ denotes the best rank $k$ approximation to $F$.
    Let $k = \lceil\sqrt{\log(8/\delta)}/\varepsilon\rceil$ and $d = d(\varepsilon) >0$ satisfy $|f_2(x,y) - f_2(x,z)| \leq \frac{\varepsilon \|f_2\|_{L^2}}{8(b-a)}$ for all $y,z \in \Omega$ such that $|y-z| < d$.
    If 
    \begin{align*}
        m \geq \max \left\{\frac{384 \eta^2}{\|f\|_{L^2}^2}\left\lceil\frac{\sqrt{\log(8/\delta)}}{\varepsilon}\right\rceil, 96 \log(8/\delta) \right\},
        \quad 
        \ell < \min \left\{ \frac{d \left( \varepsilon\sqrt{k}/\eta \right)}{\sqrt{2 \log \left(\frac{16 \eta \|f_2\|_{\infty} (b-a)}{ \varepsilon\sqrt{k} \|f_2\|_{L^2}} \right)}}, 
        \frac{5 \varepsilon \sqrt{k} \|f_2\|_{L^2}}{52 \eta \|f_2\|_\infty} \right\},
    \end{align*}
     then $Z = \mathrm{tr}(f_1) + H_m(f_2)$, where $H_m$ is the Hutchinson's estimator with length-scale parameter $\ell$, satisfies
\[
        \mathbb{P} \big( |Z - \mathrm{tr}(f)| \geq \varepsilon \Tr(f) \big) \leq \delta.
\]
\end{theorem}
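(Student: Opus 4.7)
The plan is to reduce the problem to estimating $\mathrm{tr}(f_2)$ via the continuous Hutchinson bound of Corollary~\ref{cor:ContHutch2}, to control $\|F-F_k\|_{HS}$ via Theorem~\ref{thm:rank_k}, and to combine everything with a union bound over the two probabilistic events at play.

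First, since $\mathrm{tr}(f) = \mathrm{tr}(f_1) + \mathrm{tr}(f_2)$ by hypothesis and $\mathrm{tr}(f_1)$ is taken exactly in $Z$, the error telescopes to $Z - \mathrm{tr}(f) = H_m(f_2) - \mathrm{tr}(f_2)$. Thus it suffices to estimate $\mathrm{tr}(f_2)$ by Hutchinson to within absolute error $\varepsilon\,\mathrm{tr}(f)$. Set $\varepsilon' := \varepsilon\sqrt{k}/\eta$; since $k = c\sqrt{\log(108/\delta)}/\varepsilon$, one has $(\varepsilon')^{2} = \varepsilon\, c\sqrt{\log(108/\delta)}/\eta^{2}$. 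I would apply Corollary~\ref{cor:ContHutch2} to $f_2$ with tolerance $\varepsilon'$ and confidence $\delta/2$, using $\log(54/(\delta/2)) = \log(108/\delta)$. Direct substitution shows that the stated lower bound on $m$ and the stated upper bound on $\ell$ match the conditions required by that corollary (with $\|f\|_\infty$ and the modulus $d$ of $f$ playing the role of $\|f_2\|_\infty$ and the modulus of $f_2$, scaled by $\eta$), so that with probability at least $1-\delta/2$ one has $|H_m(f_2) - \mathrm{tr}(f_2)| \leq \varepsilon' \|f_2\|_{L^2}$.

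Next I invoke the hypothesized event $E_1 = \{\|f_2\|_{L^2} \leq \eta\|F - F_k\|_{HS}\}$ (probability at least $1-\delta/2$) together with the deterministic tail bound $\|F - F_k\|_{HS} \leq \mathrm{tr}(f)/\sqrt{k}$ from Theorem~\ref{thm:rank_k}. On the intersection of $E_1$ with the Hutchinson event (probability at least $1-\delta$ by the union bound), one chains the estimates
\[
|H_m(f_2) - \mathrm{tr}(f_2)| \;\leq\; \varepsilon'\|f_2\|_{L^2} \;\leq\; \varepsilon'\eta \|F-F_k\|_{HS} \;\leq\; \frac{\varepsilon'\eta}{\sqrt{k}}\mathrm{tr}(f) \;=\; \varepsilon\,\mathrm{tr}(f),
\]
which yields the desired conclusion.

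The main obstacle is the $\ell$-side of the parameter translation. Corollary~\ref{cor:ContHutch2} applied to $f_2$ naturally involves $\|f_2\|_\infty$ and the modulus of continuity of $f_2$, whereas the stated condition is expressed in terms of $\|f\|_\infty$, the modulus $d$ of $f$, and the kernel-dependent factor $\eta$. Bridging the two requires an inheritance argument showing that the sup-norm and modulus of $f_2$ are controlled by $\eta$ times the corresponding quantities for $f$; this is the implicit content of the parameter $\eta$. The $m$-side of the translation, by contrast, is a routine algebraic substitution driven by the equalities $\log(54/(\delta/2)) = \log(108/\delta)$ and $1/(\varepsilon')^{2} = \eta^{2}/(\varepsilon c\sqrt{\log(108/\delta)})$.
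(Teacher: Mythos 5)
Your proof follows the paper's argument exactly: telescope to $H_m(f_2) - \mathrm{tr}(f_2)$, apply Corollary~\ref{cor:ContHutch2} to $f_2$ with tolerance $\varepsilon' = \varepsilon\sqrt{k}/\eta$ and confidence $\delta/2$ (using $\log(54/(\delta/2)) = \log(108/\delta)$), chain through the hypothesis $\|f_2\|_{L^2} \leq \eta\|F - F_k\|_{HS}$ and Theorem~\ref{thm:rank_k}, and union-bound the two events. The obstacle you flag on the $\ell$-side --- that Corollary~\ref{cor:ContHutch2} applied to $f_2$ requires control of $\|f_2\|_\infty$ and the modulus of continuity of $f_2$, whereas the stated hypothesis speaks only of $\|f\|_\infty$ and $d$ --- is a genuine loose end, but it is shared with the paper's own proof, which silently substitutes $f$'s constants without addressing the inheritance.
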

\begin{proof}
    We have
    \begin{equation*}
        |Z - \mathrm{tr}(f)| = |H_m(f_2) - \mathrm{tr}(f_2)|
        \leq \frac{\varepsilon \sqrt{k}}{\eta} \|f_2\|_{L^2},
    \end{equation*}
    with probability $\geq 1 - \delta/2$, where the inequality comes from the continuous Hutchinson's bound in~\cref{eq:ContHutch} with target accuracy $\varepsilon\sqrt{k}/\eta$. Then, by our assumption on $f_2$, we have
    \begin{align*}
        |Z - \mathrm{tr}(f)| &\leq \varepsilon \sqrt{k} \|F - F_k\|_{HS},
    \end{align*}
    with probability $\geq 1 - \delta$.
    Finally, by~\cref{thm:rank_k}, we can rewrite the bound in terms of $\Tr(f)$:
    \begin{equation*}
        \mathbb{P} \left( |Z - \mathrm{tr}(f)| \leq \varepsilon \Tr(f) \right) \geq 1 - \delta.
    \end{equation*}
\end{proof}

We now choose a decomposition $F = F_1 + F_2$ using the continuous randomized range finder and derive an error bound for the $\ContHutch++$ trace estimate of $f$, given by $Z$. Let $Q$ be the $\Omega \times n$ quasimatrix from the continuous randomized rangefinder applied to $F$.  The range of $Q$ approximates the range of $F$ with high probability. In particular, the recent result in~\cite[Theorem 3.3]{persson2024randomized} implies that the continuous randomized range finder applied to $F$ with target rank $k \geq 4$, oversampling parameter $p = k$, and failure probability controls $t,s \geq 1$, satisfies the error bound\footnote{The paper~\cite{persson2024randomized} deals primarily with the Nystrom approximation, but bounds for the randomized SVD of self-adjoint positive operators follow immediately from bounds for the Nystrom approximation (see Remark~2.5 of~\cite{persson2024randomized}).}
\begin{equation} \label{eq:cont_rsvd}
    \|F - QQ^T F\|_{HS} \leq \left[1 + 2\delta_k^{(\rm Tr)} + 3t^2\beta_k^{(\rm Tr)} + \frac{4 k }{(k+1)^2} e^2t^2s^2\beta_k^{(\rm Op)} \right] \left( \sum_{i = k+1}^\infty \sigma_i^2 \right)^{1/2},
\end{equation}
with probability $\geq 1 - 3t^{-p} - e^{-s^2/2}$. Here, $\sigma_1,\sigma_2,\sigma_3,\ldots$ denote the eigenvalues of $F$ and $\delta_k^{(\rm Tr)}$, $\beta_k^{(\rm Tr)}$, and $\beta_k^{(\rm Op)}$ are constants defined as follows. Let $F^TF = (V_1\quad V_2){\rm diag}(\Sigma_1,\Sigma_2)(V_1^T\quad V_2^T)^T$ represent the eigenvalue decomposition of $F^TF$ in quasimatrix form, partitioned so that $V_1$ is the quasimatrix of the first $k$ eigenvectors. Then, let $\hat K_{ij} = V_i^T K V_j$ and denote the Schur complement of the $(1,1)$ block in the $2\times 2$ block operator by $\hat K_{22,1} = \hat K_{22}-\hat K_{21}\hat K_{11}^{-1}\hat K_{21}^T$. For $\zeta \in \{\rm Tr, \rm Op\}$, define the constants
\begin{equation}
\delta_k^{(\zeta)} = \frac{\|\Sigma_2^{1/2}\hat K_{22,1}\Sigma_2^{1/2}\|_\zeta}{\|\Sigma_2\|_\zeta}\|\hat K_{11}^{-1}\|_2,
\qquad 
\beta_k^{(\zeta)} = \frac{\|\Sigma_2^{1/2}\hat K_{21}\hat K_{11}^{-1}\hat K_{21}^T\Sigma_2^{1/2}\|_\zeta}{\|\Sigma_2\|_\zeta}\|\hat K_{11}^{-1}\|_2. 
\end{equation}
\Cref{eq:cont_rsvd} controls the accuracy of the continuous randomized range finder relative to the best rank $k$ approximation of $F$. The behavior of the constants $\delta_k^{(\rm Tr)}$, $\beta_k^{(\rm Tr)}$, and $\beta_k^{(\rm Op)}$ is particularly interesting when $K=K_{SE}$ with small smoothing parameter $\ell>0$ (see the discussion after~\cref{cor:ContHutch++}).

Using the continuous randomized range finder, we can write $F$ as 
\begin{equation*}
    F = (QQ^T (F QQ^T)) + \left(FQQ^T + QQ^T F - 2QQ^T (F QQ^T) + (I - QQ^T) (F (I - QQ^T)) \right).
\end{equation*}
Since $F$ is the integral operator with continuous, PSD, symmetric kernel $f$, $\Tr(F) = \Tr(f)$.
If we take the trace of the first and the second terms above, we find that they are equal to $\mathrm{tr}(QQ^T F QQ^T)$ and $\mathrm{tr}((I - QQ^T) F (I - QQ^T))$, respectively. Thus, we choose 
\[
\begin{aligned}
    F_1 = QQ^T F QQ^T, \qquad\text{and}\qquad
    F_2 = (I - QQ^T) (F (I - QQ^T)).
\end{aligned}
\]
This choice ensures that the functions defining $F_1$ and $F_2$ are symmetric PSD. Furthermore, $F_1$ and $F_2$ are both trace class since their traces are bounded by $\Tr(f)$.
Since traces are cyclic, we find that
\begin{align*}
    \mathrm{tr}(F_1) = \Tr(Q^T FQ),\qquad\text{and}\qquad
    \Tr(F_2) = \Tr((I - QQ^T)(F (I - QQ^T))).
 \end{align*}
 We directly compute the quantity $\Tr(F_1)$ since $Q^T F Q$ is an $n \times n$ matrix. We use Hutchinson's estimator to approximate $\Tr(F_2)$. The trace of $F_1$ requires $n$ operator-function products, i.e.,
 \begin{equation*}
     \Tr(F_1) = \sum_{i=1}^n \int_{\Omega} \int_\Omega q_i(x) f(x,y) q_i(y) dxdy,
 \end{equation*}

We can use the continuous randomized range finder bound, given in~\cref{eq:cont_rsvd}, to bound the error on applying the continuous Hutchinson's estimator to $F_2$. First, note that $\Tr(F_2) = \Tr((I - QQ^T)F)$ since the trace is cyclic and the projection $I - QQ^T$ has the property that $(I - QQ^T) = (I - QQ^T)^2$. Thus, computing the continuous Hutchinson's estimate of $\Tr(F_2)$ gives a bound in terms of $\|F - QQ^T F\|_{HS}$, which we can bound using the continuous randomized range finder. That is, if we draw $k+p=2k$ functions from the Gaussian process $\mathcal{GP}(0,K_{\rm SE})$, we can apply~\cref{thm:Z1} to $F_1$ and $F_2$, with 
 \begin{equation} \label{eq:eta}
     \eta = 1 + 2\delta_k^{(\rm Tr)} + 3t^2\beta_k^{(\rm Tr)} + \frac{4k}{(k+1)^2} e^2t^2s^2\beta_k^{(\rm Op)} .
 \end{equation}
 In particular, if we choose $k = p = \mathcal{O}(1/\varepsilon)$, $t=2$, and $s\geq\sqrt{2\log(2/\delta)}$ in~\cref{eq:cont_rsvd}, then we can apply~\cref{thm:Z1} to obtain an analogue of~\cref{thm:Hutch++} and bound the number of operator-function products required by $\ContHutch++$. The following corollary demonstrates that $\ContHutch++$ typically achieves a relative error $\leq \epsilon$ with probability $1-\delta$ provided that the total number of probe samples scales like $m=\mathcal{O}(\log(8/\delta)/\varepsilon)$ and the Hutchinson length scale satisfies $\ell=\mathcal{O}(\min\{d(\sqrt{\varepsilon}),\sqrt{\varepsilon}\})$.
 
\begin{figure}
    \centering
    \begin{minipage}{0.48\textwidth}
        \begin{overpic}[width=\textwidth]{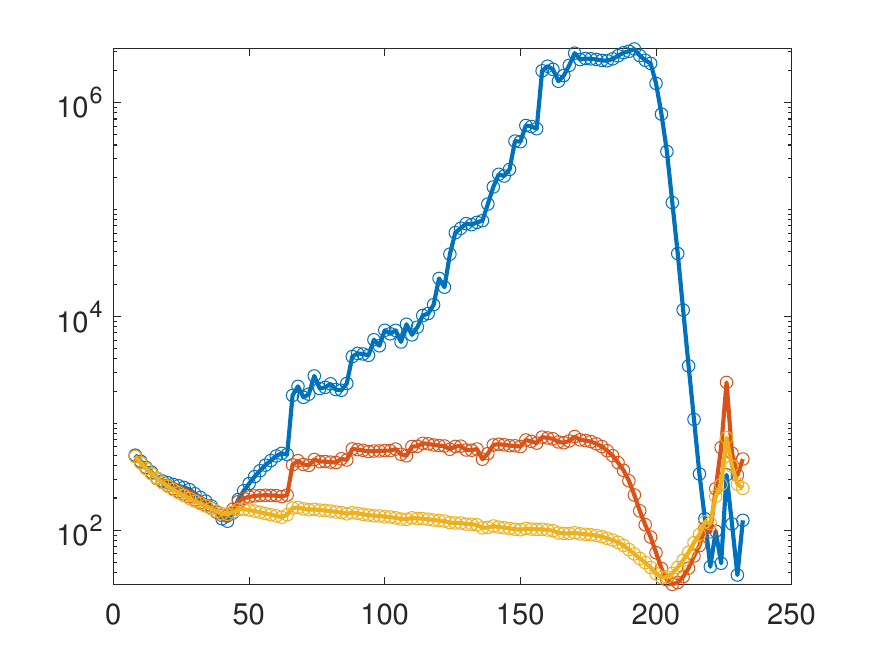}
        \put (45,-4) {$\displaystyle k$}
        \put (0,35) {\rotatebox{90} {$\displaystyle \eta$}}
        \end{overpic}
    \end{minipage}
    \begin{minipage}{0.48\textwidth}
        \begin{overpic}[width=\textwidth]{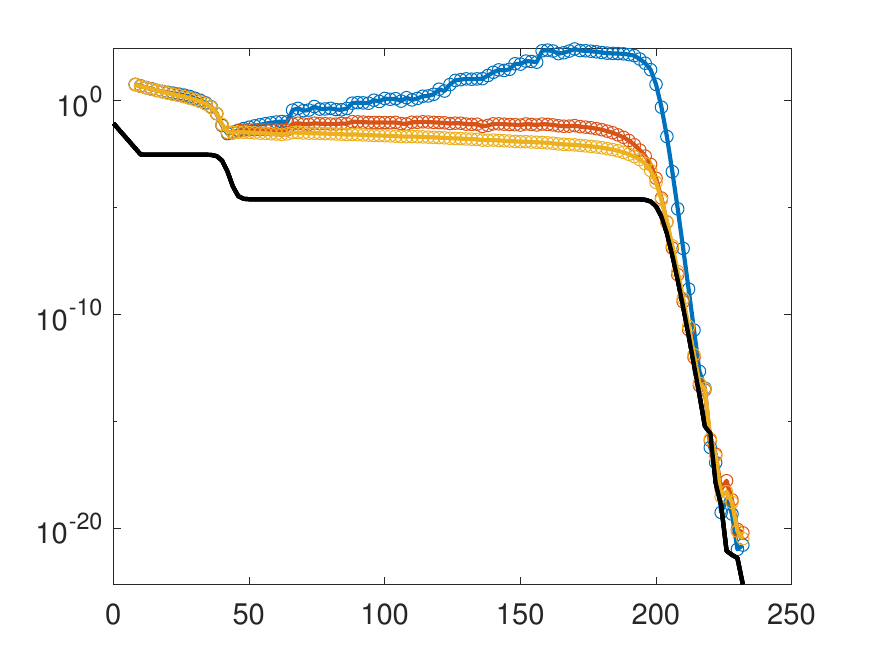}
        \put (48,-4) {$\displaystyle k$}
        \put (-1,24) {\rotatebox{90} {$\displaystyle \eta\|F-QQ^TF\|$}}
        \end{overpic}
    \end{minipage}
    \caption{\label{fig:rsvd_scaling} In the left panel, the constant $\eta$ in~\cref{cor:ContHutch++} is calculated for the trace-class kernel in~\cref{eqn:kernel2} and plotted against $k$ (the total number of probe vectors is $k+p=2k$) used in the continuous randomized range finder for $\ell=0.04$ (blue), $\ell=0.02$ (red), and $\ell=0.01$ (yellow) in the squared exponential covariance kernel $K_{\rm SE}$. In the right panel, the corresponding relative error bound for the rangefinder is plotted against $k$ for $\ell=0.04$ (blue), $\ell=0.02$ (red), and $\ell=0.01$ (yellow) and compared with the best low-rank approximation error (black).}
\end{figure}

For rigor in the following result, we make a slight modification to~\cref{alg:ContHutch++}. Suppose that the continuous randomized range finder is applied with $k+p$ probe vectors drawn from $\mathcal{GP}(0,K_{SE}^{(1)})$ with parameter $\ell_1>0$ while the Hutchinson's part of the estimate is carried out with $n$ probe vectors drawn from a second gaussian process, $\mathcal{GP}(0,K_{SE}^{(2)})$ with parameter $\ell_2>0$. The total number of operator-function products required is then $m=2(k+p)+n$. This setup allows us to circumvent the dependence of the parameter $\eta$ in~\cref{eq:eta} on properties of the kernel $K_{SE}^{(1)}$ and rigorously choose parameters required to obtain accuracy $\leq \varepsilon$ with high probability. In practice, we believe that using two separate gaussian processes in $\ContHutch++$ is unnecessary (see the discussion after~\cref{cor:ContHutch++}).
 
 \begin{corollary} \label{cor:ContHutch++}
Let $f:\Omega \times \Omega \to \mathbb{R}$ be continuous, symmetric, and PSD, $\varepsilon>0$, $0<\delta<1$, and $m, k \in \mathbb{N}$. Let $d = d(\varepsilon)$ be such that $|f(x,y) - f(x,z)| \leq \frac{\varepsilon \|f\|_{L^2}}{8(b-a)}$ for all $y,z \in \Omega$ such that $|y-z| < d$. Let $Q$ be the output of the continuous randomized rangefinder applied to $F$ with $\mathcal{GP}(0,K_{SE}^{(l_1)})$ and probe parameters $k=\lceil\sqrt{\log(8/\delta)}/\varepsilon\rceil$, $p=\max\{k,4\}$ and let $\eta$ be defined in~\cref{eq:eta} with $t=2$ and $s\geq\sqrt{2\log(2/\delta)}$. If the continuous Hutchinson's estimate of the tail $(I-QQ^T)F(I-QQ^T)$ is calculated with $n$ probe vectors and smoothing parameter $\ell_2$ that satisfy
    \begin{align*}
        n \geq \max \left\{\frac{384 \eta^2}{\|f\|_{L^2}^2}\left\lceil\frac{\sqrt{\log(8/\delta)}}{\varepsilon}\right\rceil, 96 \log(8/\delta) \right\},
        \quad
        \ell_2 < \min \left\{ \frac{d \left( \varepsilon\sqrt{k}/\eta \right)}{\sqrt{2 \log \left(\frac{16 \eta \|f\|_{\infty} (b-a)}{ \varepsilon\sqrt{k} \|f\|_{L^2}} \right)}}, 
        \frac{5 \varepsilon \sqrt{k} \|f\|_{L^2}}{52 \eta \|f\|_\infty} \right\},
    \end{align*}
     then $|\left[\mathrm{tr}(Q^TFQ)+H_m((I-QQ^T)F(I-QQ^T))\right] - \mathrm{tr}(f)| \leq \varepsilon \Tr(f)$ with probability $\geq 1 - \delta$.
\end{corollary}

When the constant $\eta$ from~\cref{eq:eta} in~\cref{cor:ContHutch++} remains bounded independently of $k$, \cref{cor:ContHutch++} shows that the number of operator-function products required to achieve target accuracy $\epsilon$ with probability $1-\delta$ scales like $m=\mathcal{O}(\log(8/\delta)/\epsilon)$. However, $\eta$ does depend on $k=p$ and $\ell_1$ through the constants $\delta_k^{(\rm Tr)}$, $\beta_k^{(\rm Tr)}$, and $\beta_k^{(\rm Op)}$. In practice, for sufficiently small $\ell_1$, we observe that $\eta$ remains bounded over practical ranges of probe vectors $k$ and the departure of the continuous randomized range finder from the best rank $k$ approximation is also bounded, leading to the observed speed-up of $\ContHutch++$. For the trace-class kernel in~\cref{eqn:kernel2},~\cref{fig:rsvd_scaling} illustrates a typical dependence of $\eta$ (left panel) and the error in the continuous randomized range finder (right panel) as a function of $k=p$ for decreasing values of $\ell$ ($\ell=0.04$ (blue), $\ell=0.02$ (red), and $\ell=0.01$ (yellow)). When there are plateaus in the spectrum of $F$, $\eta$ may grow rapidly when $\ell$ is not sufficiently small but, as $\ell$ decreases, the growth of $\eta$ over these plateaus is brought under control. When the singular values of $F$ decay rapidly, the continuous randomized range finder tracks closely with the best rank $k$ approximation. For this reason, one may use $m=\mathcal{O}(\log(8/\delta)/\epsilon)$ probe vectors drawn from a single Gaussian process in~\cref{alg:ContHutch++} with $\ell>0$ and observe the expected error scalings when $\ell$ is sufficiently small.

\section{Numerical Experiments}\label{sec:num_exp}
We now apply $\ContHutch++$ to two infinite-dimensional trace estimation problems that arise in physical applications. First, we combine $\ContHutch++$ with rational smoothing kernels and demonstrate a new fast and accurate algorithm to compute the density-of-states (DOS) of Schr\"odinger operators. Second, we illustrate how $\ContHutch++$ can efficiently and robustly calculate mean-field quantities that appear in photonic design problems with incoherent sources.

\subsection{Example 1: Density-of-states for Schr\"odinger operators}\label{sec:DOS}

\begin{figure}
    \centering
    \begin{minipage}{0.48\textwidth}
        \begin{overpic}[width=\textwidth]{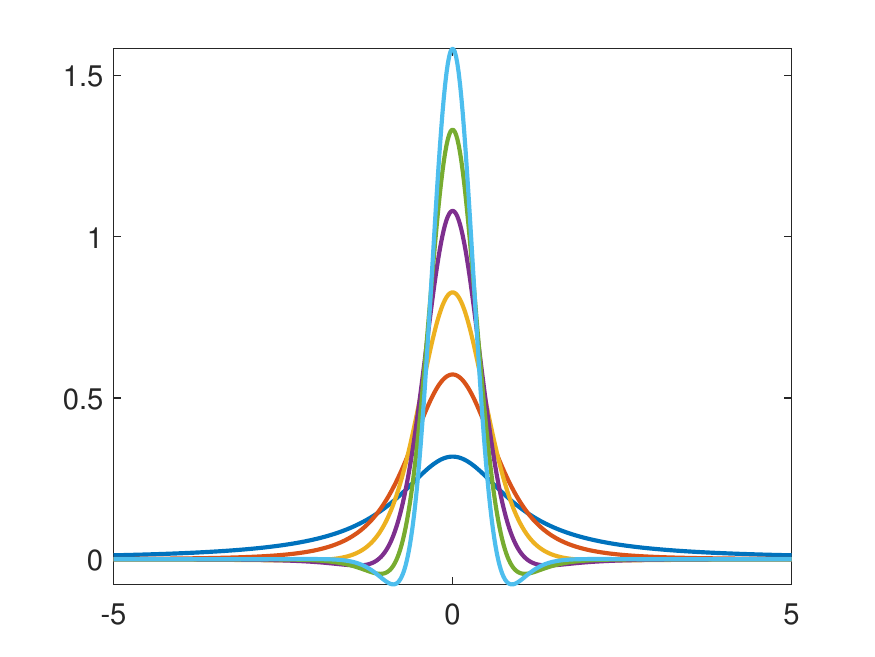}
        \put (50,-2) {$\displaystyle x$}
        \put (-1,38) {\rotatebox{90} {$\displaystyle f(x)$}}
        \put (30,72) {$\displaystyle \text{Convolution Kernels}$}
        \put (62,20) {{$\displaystyle \text{Poisson}$}}
        \put (62,28) {{$\displaystyle K = 2$}}
        \put (62,37) {{$\displaystyle K = 3$}}
        \put (62,46) {{$\displaystyle K = 4$}}
        \put (62,55) {{$\displaystyle K = 5$}}
        \put (62,64) {{$\displaystyle K = 6$}}
        \end{overpic}
    \end{minipage}
    \begin{minipage}{0.48\textwidth}
        \begin{overpic}[width=\textwidth]{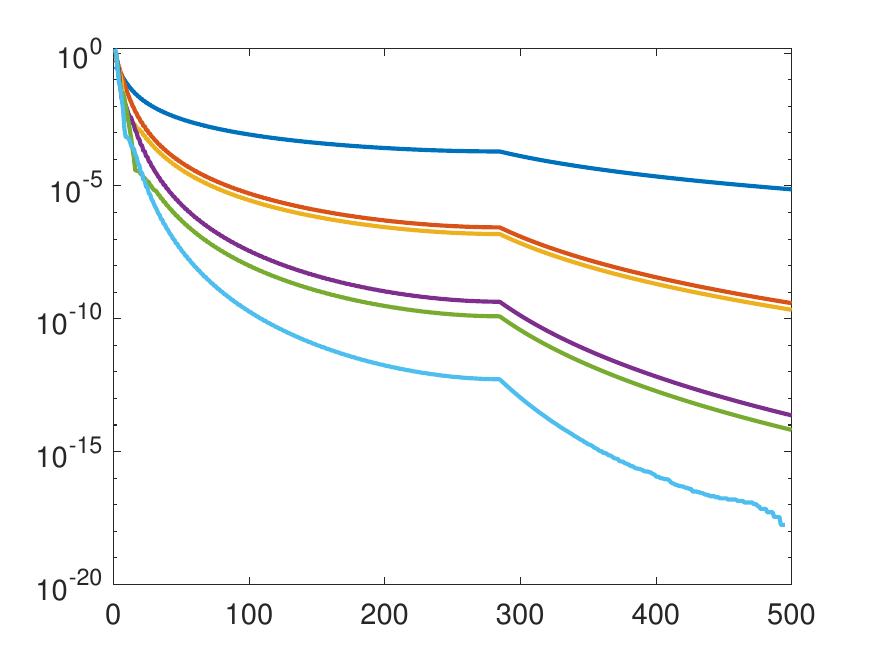}
        \put (50,-2) {$\displaystyle k$}
        \put (-1,38) {\rotatebox{90} {$\displaystyle \sigma_k$}}
        \put (24,72) {$\displaystyle \text{Singular Values of Operators}$}
        \put (73,16) {\rotatebox{-17} {$\displaystyle K = 6$}}
        \put (73,26) {\rotatebox{-16} {$\displaystyle K = 5$}}
        \put (74,33) {\rotatebox{-15} {$\displaystyle K = 4$}}
        \put (75,38) {\rotatebox{-12} {$\displaystyle K = 3$}}
        \put (76,44) {\rotatebox{-10} {$\displaystyle K = 2$}}
        \put (68,57) {\rotatebox{-7} {$\displaystyle \text{Poisson}$}}
        \end{overpic}
    \end{minipage}
    \caption{\label{fig:dos1} The first six rational convolution kernels based on equispaced points are plotted in the left panel. The eigenvalues of the corresponding operators in~\cref{eqn:dos2} with $\mathcal{L} = -\Delta$, $\lambda=10$, and $\sigma = 0.25$, are plotted in the right panel.}
\end{figure}

A non-dimensionalized Schr\"odinger operator on a finite interval of length $2L$ takes the form
\begin{equation}\label{eqn:Schrodinger}
    [\mathcal{L}u](x) = [-\Delta + v(x)]u(x), \qquad x \in[-L,L],
\end{equation}
where $\Delta$ is the Laplacian operator, $v(x)$ is a potential function, and $u(x)$ satisfies homogeneous Dirichlet or periodic boundary conditions on the boundary of the box. The eigenvalues of $\mathcal{L}$ govern the observable energy spectrum of a variety of quantum systems, and many important physical quantities can be calculated from their asymptotic density, which is known as the density-of-states and is defined as
\begin{equation}\label{eqn:dos}
    \rho_L(\lambda) = \frac{1}{2L}\sum_{k=1}^\infty \delta(\lambda - \lambda_k).
\end{equation}
Here, $\lambda_1,\lambda_2,\lambda_3,\ldots$ are the eigenvalues of $\mathcal{L}$ and $\delta(\cdot)$ is the Dirac delta measure with unit mass at the origin. The discrete measure $\rho_L(\lambda)$ often converges to an absolutely continuous measure with a piece-wise smooth density in the so-called ``thermodynamic limit" $L\rightarrow\infty$.

\begin{figure}
    \centering
    \begin{minipage}{0.48\textwidth}
        \begin{overpic}[width=\textwidth]{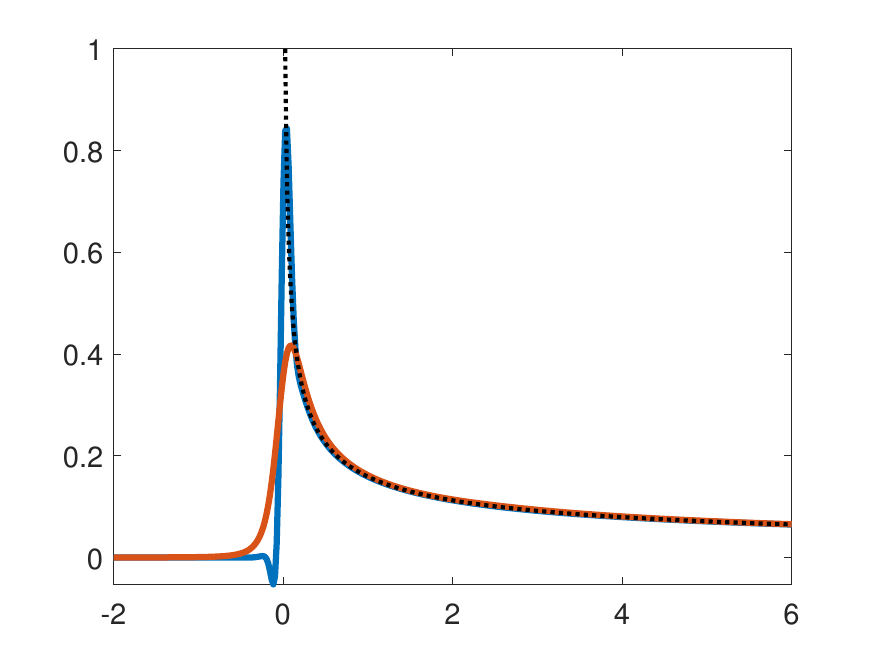}
        \put (50,-2) {$\displaystyle \lambda$}
        \put (-1,38) {\rotatebox{90} {$\displaystyle \rho_L^{(K)}$}}
        \end{overpic}
    \end{minipage}
    \begin{minipage}{0.48\textwidth}
        \begin{overpic}[width=\textwidth]{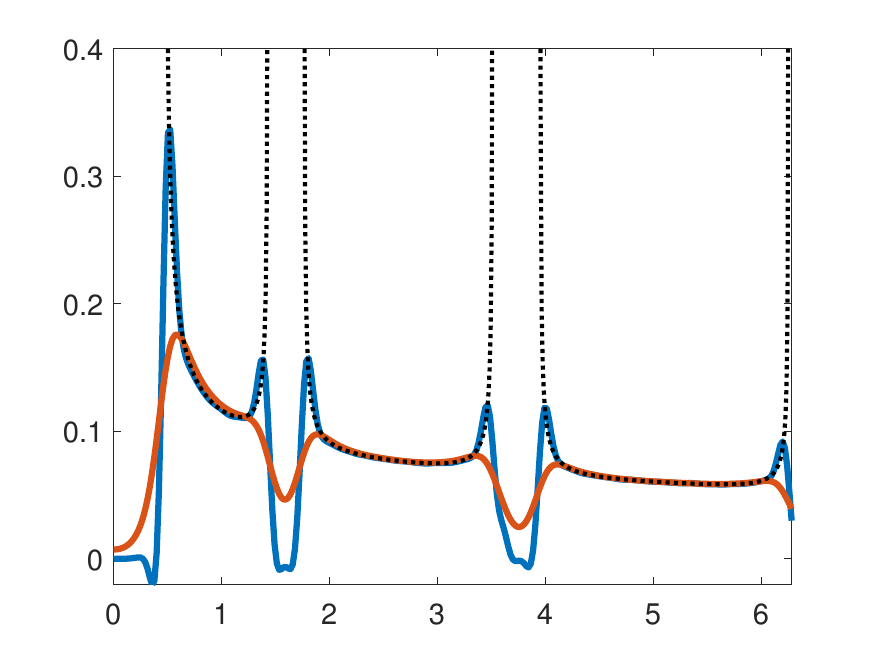}
        \put (50,-2) {$\displaystyle \lambda$}
        \put (-1,38) {\rotatebox{90} {$\displaystyle \rho_L^{(k)}$}}
        \end{overpic}
    \end{minipage}
    \caption{\label{fig:dos2} Smoothed approximate density-of-states for the free particle (left) and the Kronig--Penney model (right) were computed using rational kernels of order $K=2$ (red) and $K=8$ (blue). The smoothed approximations are compared with analytic formulas (dotted lines) for the density-of-states in the asymptotic limit $L\rightarrow\infty$.}
\end{figure}

The density-of-states can be approximated numerically by exploiting a link with the trace of the resolvent operator. After convolution with the Poisson kernel, a smoothed density-of-states is given by
\begin{equation}\label{eqn:dos1}
    \rho_L^\sigma(\lambda) = \frac{1}{2L}\sum_{k=1}^\infty\frac{\sigma}{(\lambda-\lambda_k)^2+\sigma^2} = \frac{1}{2L}{\rm tr}\left[{\rm Im}\,(\mathcal{L}-\lambda-i\sigma)^{-1}\right].
\end{equation}
As the smoothing parameter $\sigma\rightarrow 0$, $\rho_L^\sigma\rightarrow\rho_L$ weakly in the sense of measures. Numerically, $\sigma$ and the discretization of the resolvent in~\cref{eqn:dos1} must be refined with caution to avoid instability caused by finite-size effects, as discussed in the closely related context of spectral measures of self-adjoint operators~\cite{colbrook2021computing,dupuy2022finite}. Instead, we use $\ContHutch++$ to compute the trace of the resolvent, balancing the trace estimation target accuracy with asymptotically tight smoothing error bounds for $\sigma>0$~\cite{colbrook2021computing}.

For two reasons, achieving high-accuracy density-of-states calculations with the Poisson kernel is challenging. First, $\rho_L^\sigma$ converges slowly as $\sigma\rightarrow 0$, and smaller values of $\sigma$ require larger discretizations of the resolvent. Second, the filtered resolvent operator in~\cref{eqn:dos1} has a slowly decaying spectrum because of the slow decay of the Poisson kernel. Consequently, many operator-function products are required for an accurate trace estimate. We can mitigate both challenges by smoothing $\rho_L(\lambda)$ with higher-order rational convolution kernels, introduced for high-resolution computation of spectral measures, instead of the Poisson kernel~\cite{colbrook2021computing,colbrook2023computing}. A smoothed density-of-states obtained from a $K$th order rational kernel with simple poles $p_1,\ldots,p_K$ in the upper complex half-plane and residues $r_1,\ldots,r_K$ takes the form
\begin{equation}\label{eqn:dos2}
\rho_L^{(K)}(\lambda) = \frac{1}{2L}{\rm tr}\left[{\rm Im}\sum_{j=1}^K r_j(\mathcal{L}-\lambda+p_j\sigma)^{-1}\right].
\end{equation}
Six rational kernels with equispaced poles along ${\rm Im}(z)=1$ are plotted in~\cref{fig:dos1} (left) along with the eigenvalues associated with the trace estimation problems for the free particle (right), where $v(x) = 0$.

\begin{figure}
    \centering
    \begin{minipage}{0.48\textwidth}
        \begin{overpic}[width=\textwidth]{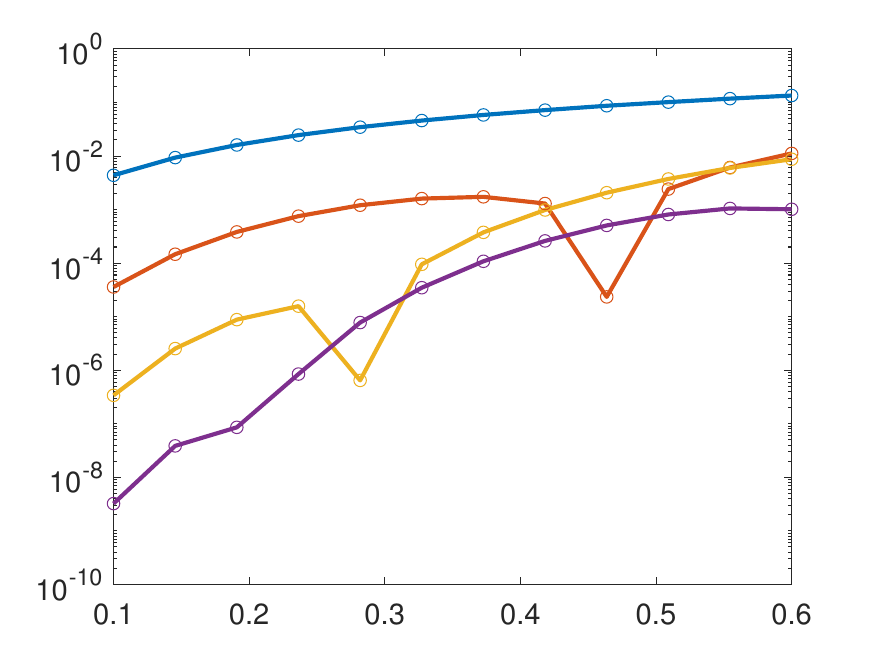}
        \put (50,-2) {$\sigma$}
        \put (0,15) {\rotatebox{90} {$\displaystyle |\mathbb{E}[H_m^{++}(f)]-tr(f)|/tr(f)$}}
        \put (15,58) {\rotatebox{10} {$\displaystyle K = 2$}}
        \put (15,47) {\rotatebox{18} {$\displaystyle K = 4$}}
        \put (15,36) {\rotatebox{24} {$\displaystyle K = 6$}}
        \put (15,24) {\rotatebox{28} {$\displaystyle K = 8$}}
        \end{overpic}
    \end{minipage}
    \begin{minipage}{0.48\textwidth}
        \begin{overpic}[width=\textwidth]{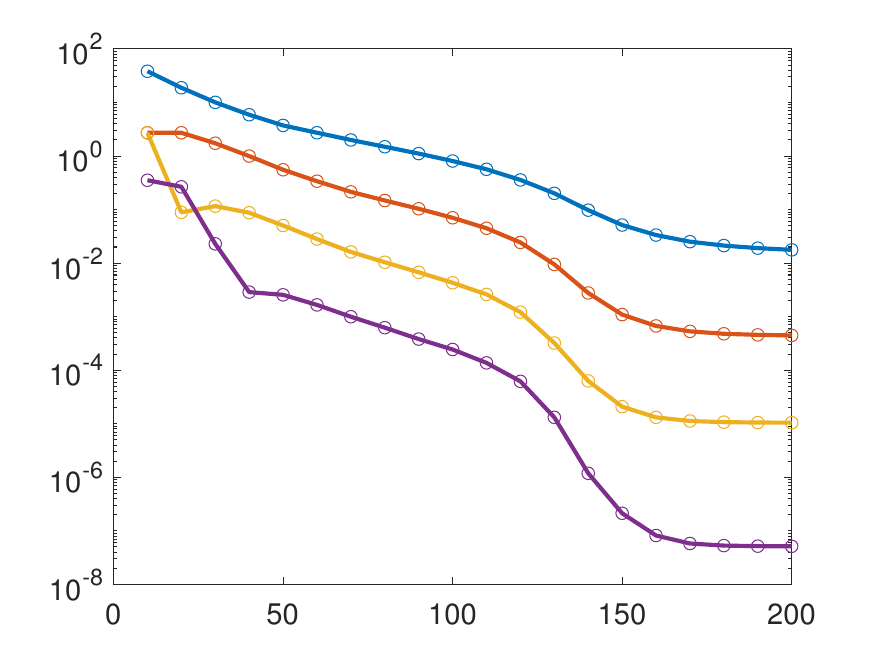}
        \put (48,-2) {$m/3$}
        \put (0,15) {\rotatebox{90} {$\displaystyle |\mathbb{E}[H_m^{++}(f)]-tr(f)|/tr(f)$}}
        \put (76,50) {\rotatebox{-4} {$\displaystyle K = 2$}}
        \put (76,39) {\rotatebox{-2} {$\displaystyle K = 4$}}
        \put (76,29) {\rotatebox{-2} {$\displaystyle K = 6$}}
        \put (76,15) {\rotatebox{-3} {$\displaystyle K = 8$}}
        \end{overpic}
    \end{minipage}
    \caption{\label{fig:dos3} Point-wise smoothing rates (left) and sample rates (right) of convergence of $\rho^{(K)}_{150}(1)$ for a free particle, with smoothing kernel orders $K=2,4,6,8$. Note that the relative errors in the right panel are plotted against $m/3$, the subspace dimension used for the randomized SVD and Hutchinson's estimate. The number of function samples was fixed at $m=600$ for the smoothing rate experiment (left), and the smoothing parameter was fixed at $\sigma=0.2$ for the sample rate experiment (right).  }
\end{figure}

\Cref{fig:dos2} compares smoothed approximations to the density-of-states for a free particle (left) and the Kronig--Penney model (right) with analytic formulas for the exact thermodynamic limit (dotted line). The $K=8$ smoothing kernel (blue) gives much sharper results than the $K=2$ kernel (red) but takes negative values near endpoint singularities because the kernel is non-positive. A smoothing parameter of $\sigma=0.2$ is employed for both experiments, and the resolvent was approximated on intervals with $L=150$ and $L=20\pi$ for the free particle and the Kronig--Penney model, respectively.

In~\cref{fig:dos3}, the mean relative error in the density-of-states, evaluated at $\lambda=1$, for the free particle is plotted against the smoothing parameter $\sigma$ (left), and the number of operator-function products, $m$, used in $\ContHutch++$ (right). Note that we plot the relative error against $m/3$, which is the dimension of the subspace used for the randomized SVD and Hutchinson's estimate that are combined in the Hutch++ estimate. The domain size is $L=150$, and the random function samples in $\ContHutch++$ are drawn from a Gaussian process generated by the squared-exponential covariance kernel with parameter $\ell=2L\times 10^{-3}$. The resolvent action is computed with a maximum discretization size of $N=4100$. The order of convergence in the smoothing parameter increases with the order of the smoothing kernel. The accuracy also improves with the number of function samples until saturating at a limiting accuracy due to smoothing. With a fixed number of samples, the higher-order kernels are more accurate due to the faster spectral decay of the filtered operator (see~\cref{fig:dos1}).

\subsection{Example 2: Mean field intensity from incoherent sources}\label{sec:field_intensity}
A recent body of work in photonic design~\cite{PhysRevLett.107.114302,PhysRevB.92.134202,yao2022trace} formulates essential physical quantities, such as average field intensity and power emission, as the trace of a self-adjoint, semi-definite operator related to the solution of Maxwell's equations. Efficient optimization schemes leverage low-rank structure in the operator to optimize the trace without explicitly forming discretizations of the operator, which is prohibitively expensive. Interestingly, the spectral profile of this operator can depend on the material parameters, geometry, and separation between the current sources and the target emission region, some or all of which may vary during optimization. This section illustrates how $\ContHutch++$ can robustly calculate the mean-square field intensity in a dielectric tube induced by spatially uncorrelated currents.

\begin{figure}
    \centering
    \begin{minipage}{0.48\textwidth}
        \begin{overpic}[width=\textwidth]{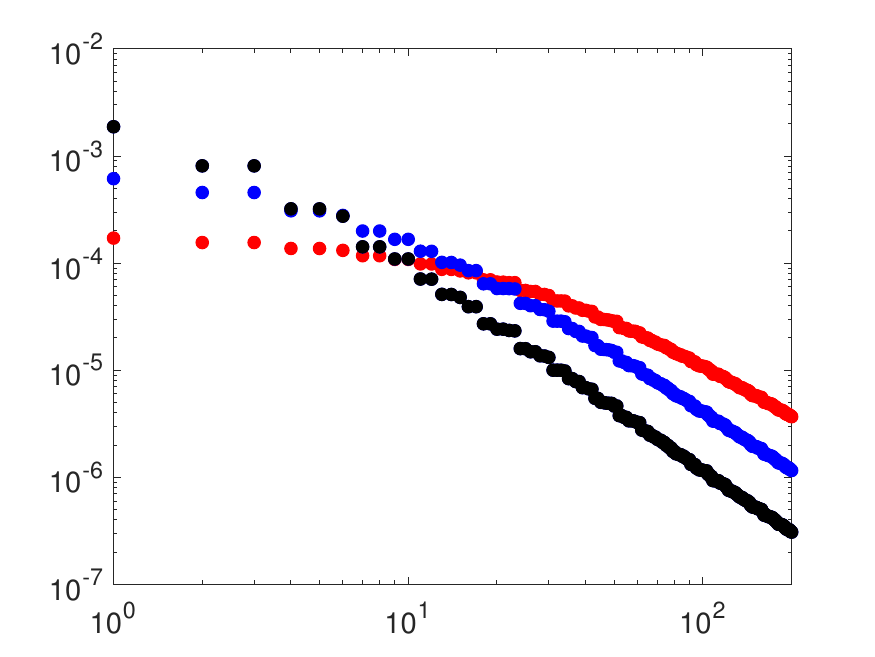}
        \put (50,-2) {$\displaystyle k$}
        \put (0,32) {\rotatebox{90} {$\displaystyle \lambda_k(H_\omega)$}}
        \put (75,38) {\rotatebox{-26} {$\displaystyle \omega = 2\pi$}}
        \put (78,25) {\rotatebox{-32} {$\displaystyle \omega = \pi$}}
        \put (72,20) {\rotatebox{-32} {$\displaystyle \omega = 0.5\pi$}}
        \end{overpic}
    \end{minipage}
    \begin{minipage}{0.48\textwidth}
        \begin{overpic}[width=\textwidth]{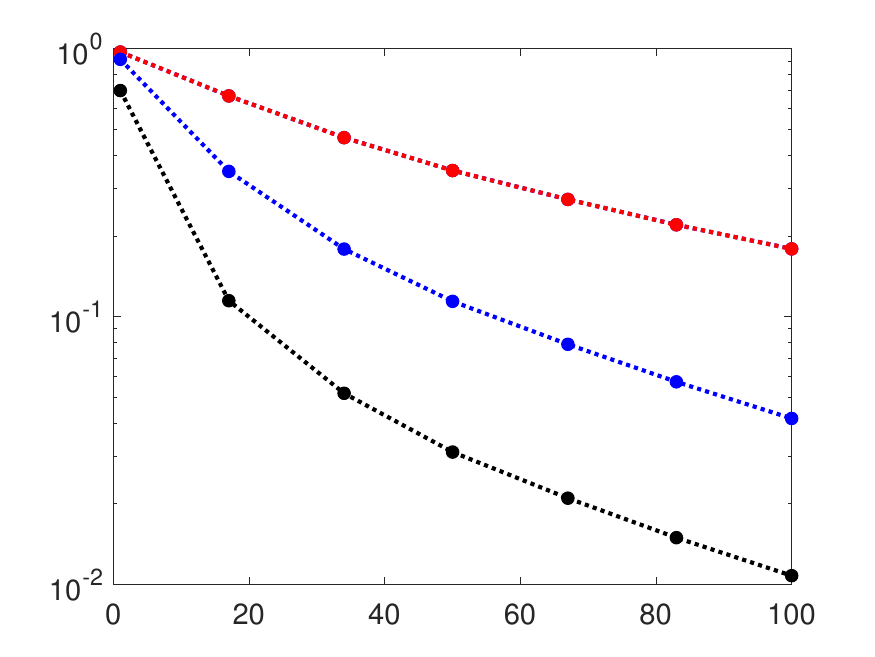}
        \put (48,-2) {$\displaystyle m/3$}
        \put (0,8) {\rotatebox{90} {$\displaystyle |H_m^{++}(H_\omega)]-tr(H_\omega)|/tr(H_\omega)|$}}
        \put (75,52) {\rotatebox{-11} {$\displaystyle \omega = 2\pi$}}
        \put (76,34) {\rotatebox{-16} {$\displaystyle \omega = \pi$}}
        \put (72,18) {\rotatebox{-18} {$\displaystyle \omega = 0.5\pi$}}
        \end{overpic}
    \end{minipage}
    \caption{\label{fig:spectrum_disk} Plotted in the left panel are the first $200$ eigenvalues of the operator in~\cref{eqn:mean_field_intensity}, whose trace corresponds to the mean field intensity due to spatially incoherent current sources with frequency $\omega$ in a dielectric tube with circular cross-section of radius $r=0.5.$ As the angular frequency $\omega$ increases from $\pi/2$ to $2\pi$, the leading spectrum of the operator becomes increasingly flat and takes longer to approach the asymptotic algebraic decay rate. In the right panel, we plot the relative error in the $\Hutch++$ trace estimate as the number of source samples increases from $m = 3,\ldots,300$.}
\end{figure}
 
Consider a $z$-invariant dielectric tube whose cross-section $\Omega\subset\mathbb{R}^2$ has a H\"older continuous boundary.
According to Maxwell's equations, a time-harmonic current with frequency $\omega$ and amplitude $b(x,y)$ in the $z$-direction produces an electric field $E_z$ in the $z$-direction satisfying (after nondimensionalization)
\begin{equation}\label{eqn:Maxwell2D}
    \Delta E_z - \omega^2\epsilon(x,y) E_z = i\omega b(x,y), \qquad \text{where}\qquad (x,y)\in\mathbb{R}^2.
\end{equation}
Here, $\epsilon(x,y)$ is the dielectric function, smoothly interpolating between the background's relative permittivity of $\epsilon_1=1$ and the dielectric's relative permittivity of $\epsilon_2=12$, so that
\begin{equation}\label{eqn:smoothed_perm}
\epsilon(x,y) = \epsilon_1 + \xi_\sigma(x,y)(\epsilon_2-\epsilon_1), \qquad\text{where}\qquad \xi_\sigma(x,y) = \frac{1}{(\sigma\sqrt{2\pi})^2}\int_\Omega e^{-\frac{(y-y')^2+(x-x')^2}{2\sigma^2}}\,dx'dy'.
\end{equation}
The interpolation parameter $\sigma>0$ controls the sharpness of the transition between the dielectric and background materials. In the limit of spatially uncorrelated currents, used to model fields in fluorescent materials and other processes of spontaneous emission, the mean-square field intensity is given by~\cite{yao2022trace}
\begin{equation}\label{eqn:mean_field_intensity}
\langle E_z \rangle = \mathbb{E}\left[\frac{1}{|\Omega|}\int_\Omega |E_z(x,y)|^2\,dxdy\right] = \frac{1}{|\Omega|}{\rm tr}\left[(A_\omega M_\Omega)^\dagger M_\Omega (A_\omega M_\Omega) \right].
\end{equation}
Here, $A_\omega$ represents the solution operator mapping the current source $b$ in the dielectric to the field $E_z$ in~\cref{eqn:Maxwell2D}. $M_\Omega$ represents multiplication by the smoothed characteristic function $\xi_\sigma$ in~\cref{eqn:smoothed_perm}. Note that the spectrum of the self-adjoint operator $H_\omega=(A_\omega M_\Omega)^\dagger M_\Omega (A_\omega M_\Omega)$ typically decays slowly because the target field and the current source are both supported in the dielectric cross-section, $\Omega$. \Cref{fig:spectrum_disk} (left panel) displays the first $200$ eigenvalues of $H_\omega$ for a circular cross-section $\Omega = \{(x,y)\in\mathbb{R}^2:x^2+y^2\leq (0.5)^2\}$ with angular frequency values $\omega = 0.5\pi$, $\pi$, and $2\pi$.

\begin{figure}[htb]
    \centering
\begin{minipage}{0.32\textwidth}
  \begin{overpic}[width=\textwidth]{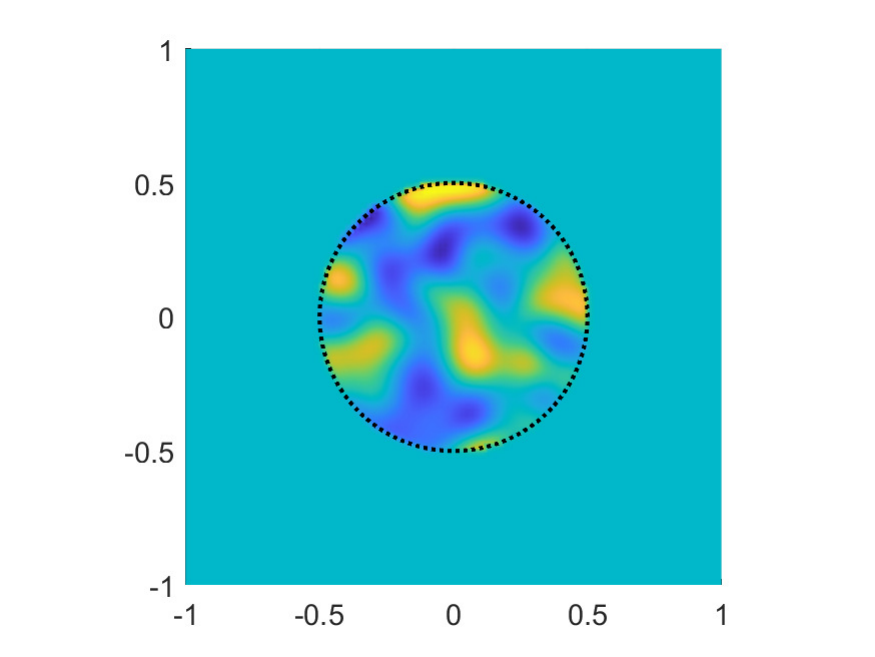}
    \put (50,-2) {$\displaystyle x$}
    \put (8,38) {$\displaystyle y$}
  \end{overpic}
\end{minipage}\hfil
\begin{minipage}{0.32\textwidth}
  \begin{overpic}[width=\textwidth]{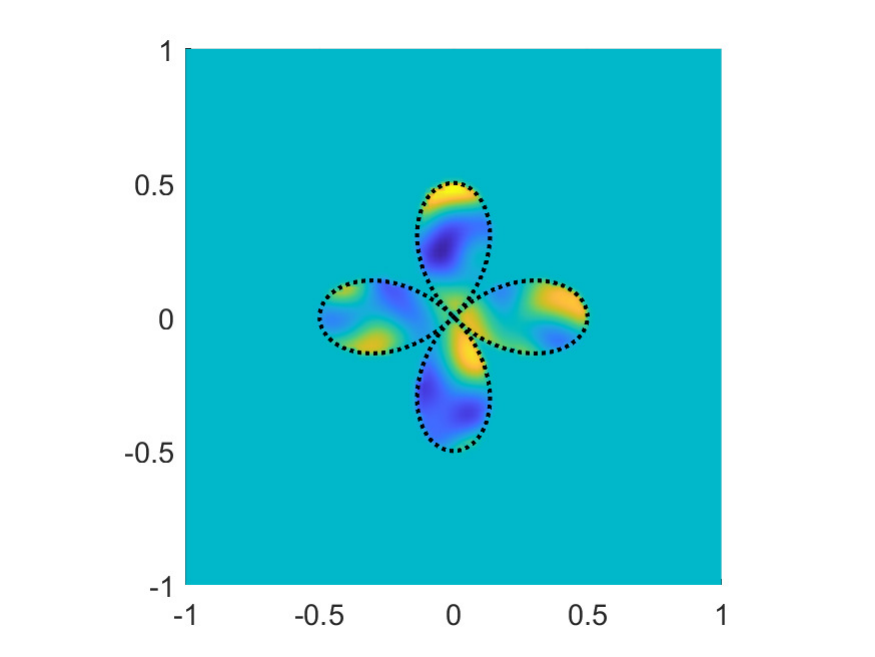}
    \put (50,-2) {$\displaystyle x$}
    \put (8,38) {$\displaystyle y$}
  \end{overpic}
\end{minipage}\hfil
\begin{minipage}{0.32\textwidth}
  \begin{overpic}[width=\textwidth]{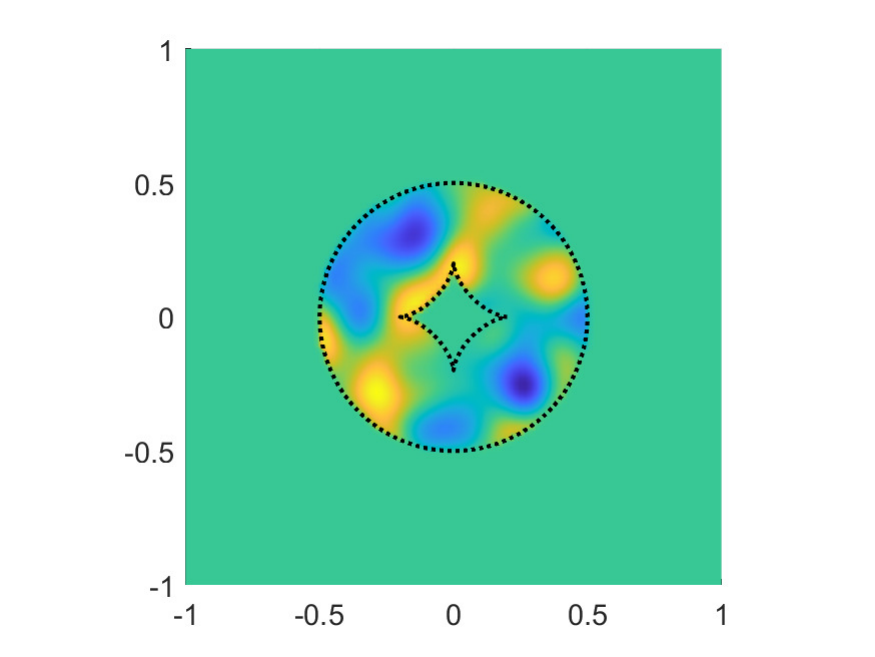}
    \put (50,-2) {$\displaystyle x$}
    \put (8,38) {$\displaystyle y$}
  \end{overpic}
\end{minipage}
\medskip
\begin{minipage}{0.32\textwidth}
  \begin{overpic}[width=\textwidth]{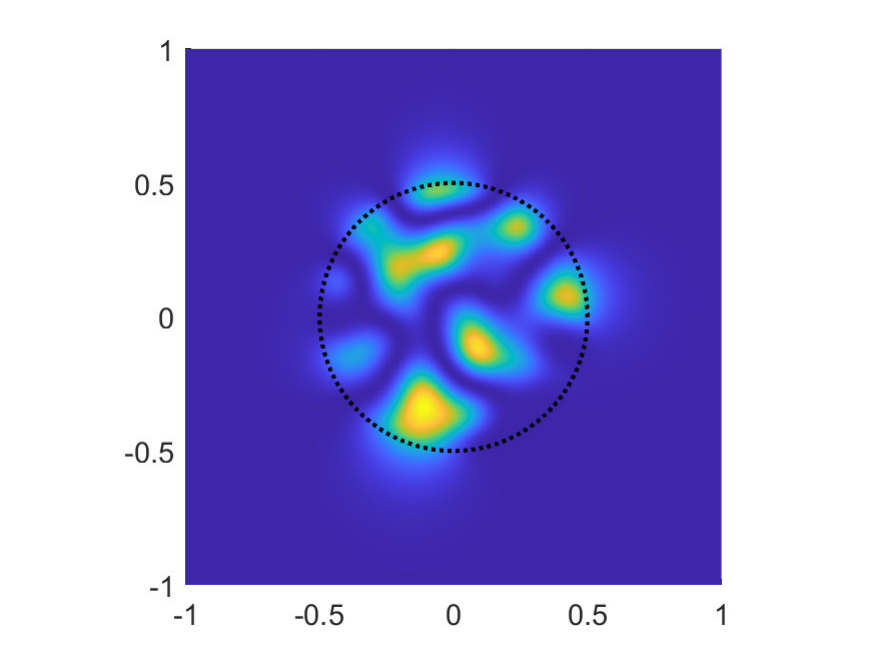}
    \put (50,-2) {$\displaystyle x$}
    \put (8,38) {$\displaystyle y$}
  \end{overpic}
\end{minipage}\hfil  
\begin{minipage}{0.32\textwidth}
  \begin{overpic}[width=\textwidth]{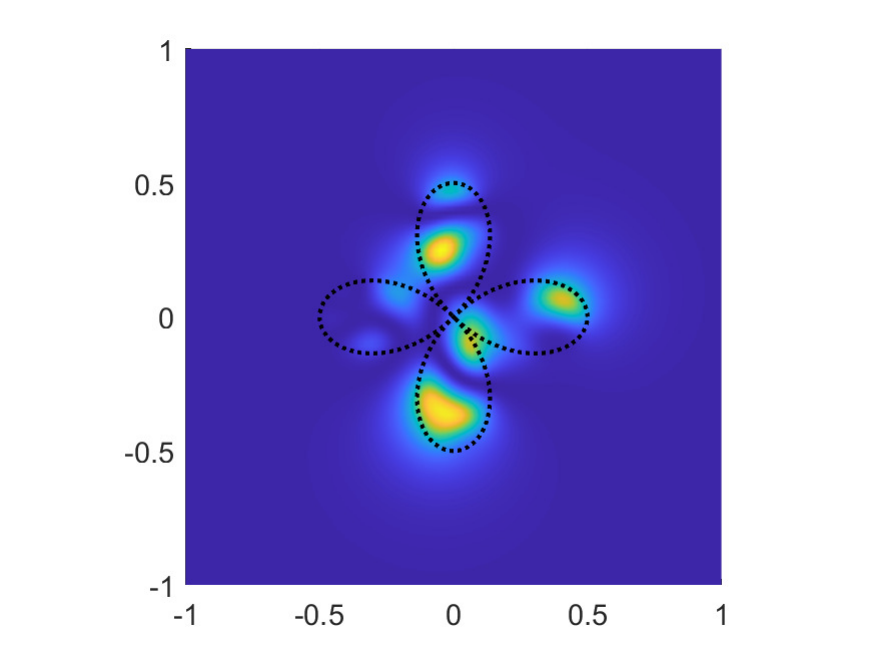}
    \put (50,-2) {$\displaystyle x$}
    \put (8,38) {$\displaystyle y$}
  \end{overpic}
\end{minipage}\hfil  
\begin{minipage}{0.32\textwidth}
  \begin{overpic}[width=\textwidth]{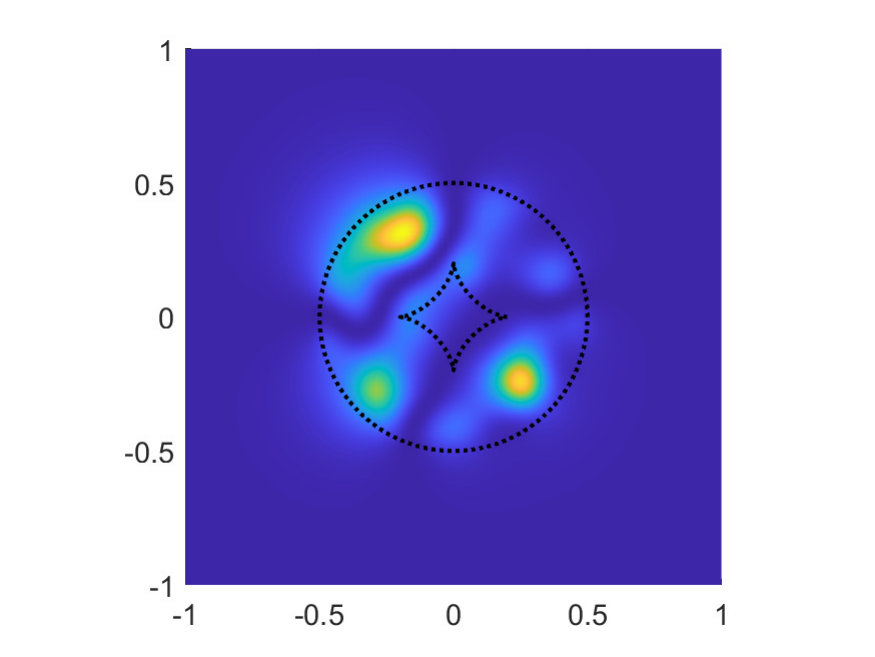}
    \put (50,-2) {$\displaystyle x$}
    \put (8,38) {$\displaystyle y$}
  \end{overpic}
\end{minipage}
\caption{Sample current sources $b(x,y)$ (top) and field intensities $|E_z(x,y)|^2$ (bottom), with frequency $\omega=\pi$, for the circle (left), quadrifolium (middle), and circle with an astroid cutout (right) cross-sections of diameter $0.5$. The trace estimates computed by $\ContHutch++$ are $\langle E_z\rangle\approx 0.0069$ (left), $0.0071$ (middle), and $0.0069$ (right). The boundary of $\Omega$ is marked with a dashed line in each panel.}
\label{fig:source_field}
\end{figure}

To approximate operator-function products with $H_\omega$, we discretize~\cref{eqn:Maxwell2D} with second-order centered finite differences on the unit square $[-1,1]^2\subset\mathbb{R}^2$, surrounded by a perfectly matched layer (PML) with unit thickness and strength, and prescribe homogeneous Dirichlet boundary conditions on the outer edges of the PML~\cite{oskooi2011distinguishing}. The dielectric cross-section $\Omega$ is scaled so that its diameter is inscribed in a disk of radius $r=0.5$ centered at the origin. Each numerical experiment in this section uses $100$ grid points in each direction on the unit square, and the interpolation parameter $\sigma>0$ in~\cref{eqn:smoothed_perm} is half the distance between grid points. The Gaussian process used for randomized trace estimation is formed from the tensor product of squared exponential kernels on the unit square with length-scale parameter $\ell=0.08$. The refinement of the $\Hutch++$ estimator with increasing sample size, $m$, is verified numerically for the circular cross-section in the right panel of~\cref{fig:spectrum_disk} with $\omega=0.5\pi$, $\pi$, and $2\pi$. \Cref{fig:source_field} displays a few random source currents, $b(x,y)$, used for trace estimation, the resulting field intensities, $|E_z(x,y)|^2$, and the estimated average field intensity $\langle E_z\rangle$ from~\cref{eqn:mean_field_intensity} for three dielectric cross-sections: a disk (left panel), a quadrifolium (middle panel), and a disk with an astroid cutout (right panel). The respective trace estimates computed by $\ContHutch++$ are $\langle E_z\rangle\approx 0.0069$ (disk), $0.0071$ (quadrifolium), and $0.0069$ (disk with astroid cutout).

\bibliographystyle{siam}
\bibliography{sample}

\appendix

\section{A Continuous Hanson--Wright Inequality} \label{sec:HW_appendix}

To prove an analogue of the Hanson--Wright inequality for operators, we first use the Karhunen–Loève theorem to write the functions drawn from a Gaussian process as a series of Gaussian random variables. 
This allows us to apply the discrete Hanson--Wright inequality to the first $n$ terms of the series. 
Finally, we take the  limit as $n\to \infty$ on the resulting inequality to get a bound for the infinite-dimensional case.\footnote{The authors thank the anonymous referee for suggesting this proof technique.}
 
Let $F$ be the integral operator defined in terms of $f$.
Let $K(x,y) = \sum_{i=1}^\infty \lambda_i e_i(x) e_i(y)$ be a Mercer decomposition of the covariance kernel $K$, where the functions $e_i$ form an orthonormal basis for $L^2(\Omega)$.
The Karhunen–Loève theorem tells us that a stochastic process can be expressed as a series of orthogonal functions weighted by random variables. 
In particular, if $g \sim \mathcal{GP}(0, K)$, then we can write $g = \sum_{i=1}^\infty \sqrt{\lambda_i} \gamma_i e_i$, where the $\gamma_i$ are i.i.d.~standard Gaussian random variables. Thus, if $R$ is the operator such that $R: (x_i) \in \ell^2 \to \sum_{i=1}^\infty \sqrt{\lambda_i} x_i e_i \in L^2(\Omega)$, then $g = R \gamma$, where we use $\gamma = (\gamma_i)$ as the sequence of standard Gaussians for convenience.

Using this representation of $g$, we can write $\phi(g)$ as 
\begin{equation*}
	\phi(g) = \langle R \gamma, F R \gamma \rangle_{L^2(\Omega)} = \langle \gamma, (R^T F R) \gamma \rangle_{\ell^2(\Omega)}. 
\end{equation*}
Since $F$ is positive definite, its eigendecomposition exists by Mercer's theorem. Let $\mu_i$ be the eigenvalues of $F$ and $\eta_i$ be the eigenvalues of $R^T F R$. 
Let $s(i)$ be a sequence such that $\mu_{s(1)} \geq \mu_{s(2)} \geq \cdots \geq 0$ is a reordering of the positive eigenvalues of $F$ in decreasing order.
Let $S_k$ denote any $k$-dimensional subset of $\ell^2(\Omega)$. 
Then the Courant-Fischer-Weyl min-max principle says that 
\begin{align*}
	\eta_{s(k)} &= \min_{S_{k-1}} \max_{\substack{\gamma \in S_{k-1}^\perp,\\ \|\gamma\|_{\ell^2(\Omega)} = 1}} \langle \gamma, (R^T F R) \gamma \rangle_{\ell^2(\Omega)} \\
	&= \min_{S_{k-1}} \max_{\substack{\gamma \in S_{k-1}^\perp,\\ \|\gamma\|_{\ell^2(\Omega)} = 1}} \langle (R\gamma), F (R\gamma) \rangle_{\ell^2(\Omega)} \\
	&= c_k \mu_{q(k)},
\end{align*}
where $c_k \in \mathbb{R}$ is some constant and the last line comes from the fact that $R\gamma$ is an eigenfunction of $F$, and thus must correspond to some eigenvalue which we denote $\mu_{q(k)}$. 
Note that the $R \gamma$ are eigenfunctions of $F$ with $\|R \gamma \|_{L^2} \leq \|R\|_{op} \|\gamma\|_{\ell^2} = \|R\|_{op} = \sqrt{\lambda_1}$. Thus, $c_k \leq \lambda_1$ for all $k \in \mathbb{N}$.
The same argument can be used to show $\eta_k = c_k \mu_q(k)$ with $c_k \in [0, \lambda_1]$ for zero eigenvalues. Therefore, the eigenvalues of $R^T F R$ are bounded by a positive constant multiple of the eigenvalues of $F$: $\eta_k = c_k \mu_k$, where $c_k \in [0, \lambda_1]$.

Thus, the eigendecomposition of $R^T F R$ can be written as $R^T F R (x) = \sum_{i=1}^\infty c_i \mu_i v_i \langle v_i, x \rangle_{\ell^2(\Omega)}$ for all $x\in \ell^2(\Omega)$, where the $v_i\in \ell^2(\Omega)$ are orthonormal eigenfunctions of $R^T F R$. Using this equation, we can express $\phi(g)$ as 
\begin{equation*}
	\phi(g) = \sum_{i=1}^\infty c_i \mu_i \langle v_i, \gamma \rangle^2_{\ell^2(\Omega)}.
\end{equation*}
However, since the $v_i$ are an orthonormal basis for $\ell^2(\Omega)$, the inner product $\langle v_i, \gamma \rangle_{\ell^2(\Omega)}$ is another independent standard Gaussian random variable. For convenience, we write $\tilde{\gamma}_i = \langle v_i, \gamma \rangle_{\ell^2(\Omega)}$. Then
\begin{equation*}
	\phi(g) = \sum_{i=1}^\infty c_i \mu_i \tilde{\gamma}_i^2
\end{equation*}
is a quadratic form on these standard Gaussian random variables. We can apply the discrete Hanson--Wright inequality~\cref{eq:discrete_HW} to the first $n$ terms of the series. Let $\phi_n(g) = \sum_{i=1}^n c_i \mu_i \tilde{\gamma}_i^2$. Then
\begin{align*}
	\mathbb{P}(|\phi(g) - \mathbb{E}[\phi(g)]| \geq t) &= \mathbb{P} \left(\lim_{n\to \infty} |\phi_n(g) - \mathbb{E}[\phi_n(g)]| \geq t \right)
\end{align*}
We want to bring the limit outside of the probability. 
For convenience, let $Z_n = |\phi_n(g) - \mathbb{E}[\phi_n(g)]|$ and $Z = \lim_{n\to \infty} Z_n$. 
Then we can rewrite this probability as $\int \mathbbm{1}_{\left(lim_{n\to \infty} Z_n \geq t \right)} dM$ for some probability measure $M$. 
Thus, we need to show that we can pull the limit out of the indicator function, $\mathbbm{1}$. Notice that if $Z > t$, then there exists some $N \in \mathbb{N}$ such that for all $n\geq N$, $|Z - Z_n| < Z - t$, so $Z_n > t$. 
In this case, $\lim_{n\to \infty} \mathbbm{1}_{\left(Z_n \geq t \right)} = \mathbbm{1}_{\left(Z \geq t \right)}$. 
The case of $Z<t$ is similar. 
Since $Z_n$ is defined in terms of Gaussian random variables, $\mathbb{P}(Z = t) = 0$. Thus, 
\begin{align*}
	\mathbb{P} \left(\lim_{n\to\infty} Z_n \geq t \right) &= \int \lim_{n\to\infty} \mathbbm{1}_{Z_n \geq t} dM \\
	&= \lim_{n\to \infty} \mathbb{P}(Z_n \geq t). 
\end{align*}
The second line comes from the dominated convergence theorem, since the indicator function is always dominated by $1$.

Then, by the discrete Hanson--Wright inequality,
\begin{align*}
	\mathbb{P} \left(|\phi(g) - \mathbb{E}[\phi(g)]| \geq t \right) &= \lim_{n\to \infty} \mathbb{P}(|\phi_n(g) - \mathbb{E}[\phi_n(g)]| \geq t) \\
	&\leq \lim_{n\to\infty} 2 \exp \left(- \frac{t^2}{16 \left( 5 \sum_{i=1}^n (c_i \mu_i)^2 + \max_{i=1, \ldots, n} (c_i \mu_i) t \right)}  \right) \\
	&\leq 2 \exp \left(- \frac{t^2}{16 \left( 5 \|F\|_{HS}^2 \|K\|_{op}^2 +  \|F\|_{op} \|K\|_{op} t \right)}  \right).
\end{align*}

\subsection{Proof of Continuous Hanson--Wright Inequality Corollary,~\cref{cor:HW}}
\begin{proof}
Notice that even when $f$ is not symmetric, we have
\begin{equation*}
    \int_\Omega \int_\Omega g(x) f(x,y) g(y) dx dy = \int_\Omega \int_\Omega g(x) f(y,x) g(y) dxdy. 
\end{equation*}
Thus, the symmetric function $\Tilde{f}(x,y) = \frac{1}{2}\left( f(x,y) + f(y,x) \right)$ satisfies 
\begin{equation*}
    \int_\Omega \int_\Omega g(x) \Tilde{f}(x,y) g(y) dx dy = \int_\Omega \int_\Omega g(x) f(x,y) g(y) dx dy.
\end{equation*}

Further, $\|\tilde{f}\| \leq \frac{1}{2}( \|f\| + \|f^T\|) = \|f\|$, for both the $L^2$ norm and the operator norm. Thus,~\cref{thm:HW} holds for both $f_1$ and $f_2$.
However, we want to find a bound for $f = f_1 - f_2$. Note that since $f = f_1 - f_2$, $\tilde{f} = \tilde{f_1} - \tilde{f_2}$ where $\tilde{f}_1(x,y) = \frac{1}{2}(f_1(x,y) + f_1(y,x))$ and $\tilde{f}_2 = \frac{1}{2}(f_2(x,y) + f_2(y,x))$, so we can express everything in terms of $\tilde{f_1}$ and $\tilde{f_2}$.
For convenience, let $\phi_1$ be defined in terms of $f_1$, $\phi_2$ be defined in terms of $f_2$, and $\phi$ be defined in terms of $f$. Thus, we can simplify the probability in terms of $f$ as
\begin{align*}
    \mathbb{P}\left( |\phi(g) - \E[\phi(g)]| \geq t  \right) &= \mathbb{P}\left( |\phi_1(g) - \phi_2(g) - \E[\phi_1(g) - \phi_2(g)]| \geq t  \right) \\
    &\leq \mathbb{P}\left( |\phi_1(g) - \E[\phi_1(g)]| + |\phi_2(g) - \E[\phi_2(g)]| \geq t  \right) \\
    &\leq \mathbb{P}(|\phi_1(g) - \E[\phi_1(g)]| \geq t/2) + \mathbb{P}(|\phi_2(g) - \E[\phi_2(g)] | \geq t/2),
\end{align*}
where the last line comes from the fact that to have $|\phi_1(g) - \E[\phi_1(g)]| + |\phi_2(g) - \E[\phi_2(g)]| \geq t$, we certainly need at least one of $|\phi_1(g) - \E[\phi_1(g)]|$ and $|\phi_2(g) - \E[\phi_2(g)]|$ to be at least $t/2$.

Let $\lambda_i$ be the $i^{th}$ eigenvalue of $\tilde{f}$. Let $S = \{ \lambda_i : \lambda_i \geq 0\}$.
Note that $\|\tilde{f}_j\|_{op} \leq |\lambda_1| = \|\Tilde{f}\|_{op}$ for $j=1,2$. Furthermore, $\|\tilde{f}_1\|_{L^2}^2 = \sum_{i\in S} \lambda_i^2 \leq \sum_{i=1}^\infty \lambda_i^2 = \|\Tilde{f}\|_{L^2}^2$ and $\|\tilde{f}_2\|_{L^2}^2 = \sum_{i\in \mathbb{N}-S} \lambda_i^2 \leq \sum_{i=1}^\infty \lambda_i^2 = \|\Tilde{f}\|_{L^2}^2$.
Thus, we can write the probability bound in terms of the norm of $f$. That is, from~\cref{thm:HW},
\begin{align*}
    \mathbb{P}\left( |\phi(g) - \E[\phi(g)]| \geq t  \right) &\leq 2 \exp \left(- \frac{t^2}{16 \left( 5 \|F\|_{L^2}^2 \|K\|_{op}^2 +  \|F\|_{op} \|K\|_{op} t \right)} \right) \\
    &+ 2 \exp \left(- \frac{t^2}{16 \left( 5 \|F\|_{L^2}^2 \|K\|_{op}^2 +  \|F\|_{op} \|K\|_{op} t \right)}  \right) \\
    &\leq 4 \exp \left(- \frac{t^2}{16 \left( 5 \|F\|_{L^2}^2 \|K\|_{op}^2 + \|F\|_{op} \|K\|_{op} t \right)}  \right).
\end{align*}
\end{proof}
\end{document}